\documentclass[12pt]{article}

\usepackage{listings}
\usepackage{verbatim}
\usepackage{setspace}
\usepackage{sectsty}
\subsectionfont{\normalsize}
\makeatletter
\renewcommand\section{\@startsection{section}{1}{\z@}%
                                  {-2.0ex \@plus -1ex \@minus -.2ex}%
                                  {2.0ex \@plus.2ex}%
                                  {\normalfont\normalsize\bfseries}}
\makeatother
\doublespacing
\usepackage{fullpage}
\usepackage{url}
\usepackage{amsmath}
\usepackage{mathdots}
%

%
\overfullrule=0pt
%
%
%
%

\newcommand{\UF}{\infty}
\newcommand{\bdag}{b^\dagger}
\newcommand{\fI}{{f_A^\I}}
\newcommand{\fII}{{f_A^{\II}}}
\newcommand{\gI}{{g_A^\I}}
\newcommand{\gII}{{g_A^{\II}}}
\newcommand{\CI}{{C^\I_A}}
\newcommand{\CII}{{C^{\II}_A}}
\newcommand{\cI}{{c^\I_A}}
\newcommand{\cII}{{c^{\II}_A}}
\newcommand{\lI}{{\ell^\I}}
\newcommand{\lII}{{\ell^{\II}}}

\newcommand{\finsubset}{\subseteq^{\rm fin}}
\newcommand{\NIM}{{\rm NIM}}
\newcommand{\WAcash}{W_A^{\rm cash}}
\newcommand{\I}{{\rm I } }
\newcommand{\II}{{\rm II  }}

\newcommand{\IIns}{{\rm II}}


%
%

\newcommand{\ang}[1]{\langle#1\rangle}

\newcommand{\nat}{{\sf N}}

%
%
\newcommand{\xvec}[1]{\ifcase 3{#1} {\ang {x_1,x_2,x_3} } \else 
\ifcase 4{#1} {\ang{x_1,x_2,x_3,x_4}} \else {\ang {x_1,\ldots,x_{#1}}}\fi\fi}
\newcommand{\yvec}[1]{\ifcase 3{#1} {\ang {y_1,y_2,y_3} } \else 
\ifcase 4{#1} {\ang{y_1,y_2,y_3,y_4}} \else {\ang {y_1,\ldots,y_{#1}}}\fi\fi}
\newcommand{\zvec}[1]{\ifcase 3{#1} {\ang {z_1,z_2,z_3} } \else 
\ifcase 4{#1} {\ang{z_1,z_2,z_3,z_4}} \else {\ang {z_1,\ldots,z_{#1}}}\fi\fi}
\newcommand{\vecc}[2]{\ifcase 3{#2} {\ang { {#1}_1,{#1}_2,{#1}_3 } } \else
\ifcase 4{#1} {\ang { {#1}_1,{#1}_2,{#1}_3,{#1}_{4} } }
\else {\ang { {#1}_1,\ldots,{#1}_{#2}}}\fi\fi}
\newcommand{\veccd}[3]{\ifcase 3{#2} {\ang { {#1}_{{#3}1},{#1}_{{#3}2},{#1}_{{#3}3} } } \else
\ifcase 4{#1} {\ang { {#1}_{{#3}1},{#1}_{{#3}2},{#1}_{#3}3},{#1}_{{#3}4} }
\else {\ang { {#1}_{{#3}1},\ldots,{#1}_{{#3}{#2}}}}\fi\fi}
%

%
%
%
\newcommand{\veccz}[2]{\ifcase 3{#2} {\ang { {#1}_0,{#1}_2,{#1}_3 } } \else
\ifcase 4{#1} {\ang { {#1}_0,{#1}_2,{#1}_3,{#1}_{4} } }
\else {\ang { {#1}_0,\ldots,{#1}_{#2}}}\fi\fi}
%

%
%
\newcommand{\xve}[1]{\ifcase 3{#1} {x_1,x_2,x_3} \else 
\ifcase 4{#1} {x_1,x_2,x_3,x_4} \else {x_1,\ldots,x_{#1}}\fi\fi}
\newcommand{\yve}[1]{\ifcase 3{#1} {y_1,y_2,y_3} \else 
\ifcase 4{#1} {y_1,y_2,y_3,y_4} \else {y_1,\ldots,y_{#1}}\fi\fi}
\newcommand{\zve}[1]{\ifcase 3{#1} {z_1,z_2,z_3} \else 
\ifcase 4{#1} {z_1,z_2,z_3,z_4} \else {z_1,\ldots,z_{#1}}\fi\fi}
\newcommand{\ve}[2]{\ifcase 3#2 {{#1}_1,{#1}_2,{#1}_3} \else
\ifcase 4#2 {{#1}_1,{#1}_2,{#1}_3,{#1}_{4}}
\else {{#1}_1,\ldots,{#1}_{#2}}\fi\fi}
\newcommand{\ved}[3]{\ifcase 3#2 {{#1}_{{#3}1},{#1}_{{#3}2},{#1}_{{#3}3}} \else
\ifcase 4#2 {{#1}_{{#3}1},{#1}_{{#3}2},{#1}_{{#3}3},{#1}_{{#3}4}}
\else {{#1}_{{#3}1},\ldots,{#1}_{{#3}{#2}}}\fi\fi}
\newcommand{\fuve}[3]{
\ifcase 3#2
{{#3}({#1}_1),{#3}({#1}_2,{#3}({#1}_3)} \else
\ifcase 4#2
{{#3}({#1}_1),{#3}({#1}_2),{#3}({#1}_3),{#3}({#1}_4)}
\else
{{#3}({#1}_1),\ldots,{#3}({#1}_{#2})}\fi\fi}
%

%
%


%
%

\newcommand{\setmathchar}[1]{\ifmmode#1\else$#1$\fi}
\newcommand{\vlist}[2]{%
	\setmathchar{%
		\compound#2\one{#2}\two
		\ifcompound
			({#1}_1,\ldots,{#1}_{#2})
		\else
			\ifcat N#2
				({#1}_1,\ldots,{#1}_{#2})
			\else
				\ifcase#2
					({#1}_0)\or
					({#1}_1)\or
					({#1}_1,{#1}_2)\or 
					({#1}_1,{#1}_2,{#1}_3)\or
					({#1}_1,{#1}_2,{#1}_3,{#1}_4)\else 
					({#1}_1,\ldots,{#1}_{#2})
				\fi
			\fi
		\fi}}

\newif\ifcompound
\def\compound#1\one#2\two{%
	\def\one{#1}
	\def\two{#2}
	\if\one\two
		\compoundfalse
	\else
		\compoundtrue
	\fi}

%
%
\newcommand{\xwe}[1]{\ifcase 3{#1} {x_1\wedge x_2\wedge x_3} \else 
\ifcase 4{#1} {x_1\wedge x_2\wedge x_3\wedge x_4} \else {x_1\wedge \cdots \wedge
x_{#1}}\fi\fi}
\newcommand{\we}[2]{\ifcase 3#2 {\ang { {#1}_1\wedge {#1}_2\wedge {#1}_3 } } \else
\ifcase 4{#1} {\ang { {#1}_1\wedge {#1}_2\wedge {#1}_3\wedge {#1}_{4} } }
\else {\ang { {#1}_1\wedge \cdots\wedge {#1}_{#2}}}\fi\fi}

\newcommand{\st}{\mathrel{:}}

\newcommand{\ceil}[1]{\left\lceil {#1}\right\rceil}
\newcommand{\floor}[1]{\left\lfloor{#1}\right\rfloor}

%
%

\newcommand{\s}[1]{\s_{#1}}

\newcommand{\monus}{\;\raise.5ex\hbox{{${\buildrel
    \ldotp\over{\hbox to 6pt{\hrulefill}}}$}}\;}

%
%
%
%
%
%
\newcounter{savenumi}

\newtheorem{theoremfoo}{Theorem}[section] 
\newenvironment{theorem}{\pagebreak[1]\begin{theoremfoo}}{\end{theoremfoo}}

\newtheorem{lemmafoo}[theoremfoo]{Lemma}
\newenvironment{lemma}{\pagebreak[1]\begin{lemmafoo}}{\end{lemmafoo}}
\newtheorem{conjecturefoo}[theoremfoo]{Conjecture}

\newenvironment{conjecture}{\pagebreak[1]\begin{conjecturefoo}}{\end{conjecturefoo}}

\newtheorem{conventionfoo}[theoremfoo]{Convention}

\newtheorem{porismfoo}[theoremfoo]{Porism}

\newtheorem{gamefoo}[theoremfoo]{Game}

\newtheorem{corollaryfoo}[theoremfoo]{Corollary}

\newtheorem{openfoo}[theoremfoo]{Open Problem}

\newtheorem{exercisefoo}{Exercise}

\newcommand{\fig}[1] 
{
 \begin{figure}
 \begin{center}
 \input{#1}
 \end{center}
 \end{figure}
}

\newtheorem{potanafoo}[theoremfoo]{Potential Analogue}

\newtheorem{notefoo}[theoremfoo]{Note}
\newenvironment{note}{\pagebreak[1]\begin{notefoo}\rm}{\end{notefoo}}

\newtheorem{notabenefoo}[theoremfoo]{Nota Bene}

\newtheorem{nttn}[theoremfoo]{Notation}
\newenvironment{notation}{\pagebreak[1]\begin{nttn}\rm}{\end{nttn}}

\newtheorem{empttn}[theoremfoo]{Empirical Note}

\newtheorem{examfoo}[theoremfoo]{Example}
\newenvironment{example}{\pagebreak[1]\begin{examfoo}\rm}{\end{examfoo}}

\newtheorem{dfntn}[theoremfoo]{Def}
\newenvironment{definition}{\pagebreak[1]\begin{dfntn}\rm}{\end{dfntn}}

\newtheorem{propositionfoo}[theoremfoo]{Proposition}

\newenvironment{proof}
    {\pagebreak[1]{\narrower\noindent {\bf Proof:\quad\nopagebreak}}}{\QED}

\newcommand{\yyskip}{\penalty-50\vskip 5pt plus 3pt minus 2pt}
\newcommand{\blackslug}{\hbox{\hskip 1pt
        \vrule width 4pt height 8pt depth 1.5pt\hskip 1pt}}
\newcommand{\QED}{{\penalty10000\parindent 0pt\penalty10000
        \hskip 8 pt\nolinebreak\blackslug\hfill\lower 8.5pt\null}
        \par\yyskip\pagebreak[1]}

\newcommand{\BBB}{{\penalty10000\parindent 0pt\penalty10000
        \hskip 8 pt\nolinebreak\hbox{\ }\hfill\lower 8.5pt\null}
        \par\yyskip\pagebreak[1]}

\newtheorem{factfoo}[theoremfoo]{Fact}




\newenvironment{block}{\begin{list}{\hbox{}}{\leftmargin 1em
    \itemindent -1em \topsep 0pt \itemsep 0pt \partopsep 0pt}}{\end{list}}


\dimen15=0.75em
\dimen16=0.75em



\everymath{\displaystyle}

\begin{document}

\title{NIM with Cash}

\author{
{William Gasarch}
\thanks{University of Maryland,
Dept. of Computer,
        College Park, MD\ \ 20742.
\texttt{gasarch@cs.umd.edu}
}
\\ {\small Univ. of MD at College Park}
\and
{John Purtilo}
\thanks{University of Maryland,
Dept. of Computer Science,
        College Park, MD\ \ 20742.
\texttt{jep1911@umd.edu}
}
\\ {\small Univ. of MD at College Park}
\and
{Douglas Ulrich}
\thanks{University of Maryland,
Dept. of Mathematics,
        College Park, MD\ \ 20742.
\texttt{ds\_ulrich@hotmail.com}
}
\\ {\small Univ. of MD at College Park}
}

\date{}

\maketitle

\begin{abstract}
Let $A$ be a finite subset of $\nat$.
Then $\NIM(A;n)$ is the following 2-player game:
initially there are $n$ stones on the board
and the players alternate removing $a\in A$
stones. The first player who cannot move loses.
This game has been well studied. 

We investigate an extension of the game where Player \I starts out
with $d$ dollars, Player \II starts out with $e$ dollars,
and when a player removes $a\in A$ he loses $a$ dollars.
The first player who cannot move loses; however, note this can happen 
for two different reasons: (1) the number of stones is less than $\min(A)$,
(2) the player has less than $\min(A)$ dollars.
This game leads to more complex win conditions then standard NIM.

We prove some general theorems from which we can obtain win conditions
for a large variety of finite sets $A$. We then apply them to
the sets $A=\{1,L\}$, and $A=\{1,L,L+1\}$.
\end{abstract}

\section{Introduction}

\begin{notation}
$A\finsubset B$ means that $A$ is a finite subset of $B$.
\end{notation}

\begin{definition}
Let $A\finsubset\nat$ and let $n\in\nat$.
Let $a_1=\min(A)$.
$\NIM(A;n)$ is played as follows:
\begin{enumerate}
\item 
There are two players, Player I and Player II.
They alternate moves with Player I going first.
During a player's turn he removes $a\in A$ stones from the board.
\item
Initially there are $n$ stones on the board.
\item
If a player cannot move he loses. 
If there are $s$ stones on the board and $s<a_1$ then the player loses.
\end{enumerate}
\end{definition}

\begin{notation}
We will usually omit the word {\it stones} and just
say that a Player {\it removes X } rather than
{\it removes X stones}.
\end{notation}

\begin{notation}~
The expression {\it Player I wins} means that Player I has a strategy that
will win regardless of what Player II does.
If $A\finsubset\nat$ and $n\in\nat$ then we write $W_A(n)=\I$ if Player \I wins.
Similar for {\it Player II wins}.
If the game is understood we may simply use $W$.
\end{notation}

NIM is an example of a {\it combinatorial game}. Such games
have a vast literature 
(see the selected bibliography of Frankel~\cite{combgamesbib}).
Variants on the 1-pile version have included
letting the number of stones a player can remove
depend on how many stones are in the pile~\cite{NIMarb},
letting the number of stones a player can remove depend on the player~\cite{partizan},
allowing three players~\cite{threenim},
viewing the stones as cookies that may spoil~\cite{cookie},
and others. 
Grundy~\cite{grundy} and Sprague~\cite{sprag} showed how to analyze
many-pile NIM games by analyzing the 1-pile NIM games that it consists of.
NIM games are appealing because they are easy to explain,
yet involve interesting (and sometimes difficult) mathematics
to analyze.

We give several examples of known win-loss patterns for NIM-games.

\begin{example}\label{ex:nimexamples}~
We specify a NIM game by specifying the $A\finsubset\nat$.
\begin{enumerate}
\item
Let $1\le L < M$,
$A=\{L,\ldots,M\}$.
$W(n)=\II$ iff
$n\equiv 0,1,2,\ldots,L-1 \pmod {L+M}$.

\item
Let $L\ge 2$, even and $A=\{1,L\}$.
$W(n)=\II$ iff 
$n\equiv 0,2,4,\ldots,L-2 \pmod {L+1}$.
(We leave it to the reader to show that the case of $L$ odd is boring.)

\item
Let $L\ge 2$, even and
$A=\{1,L,L+1\}$.
$W(n)=\II$ iff $n\equiv 0,2,4\ldots,L-2 \pmod {2L}$.

\item
Let $L\ge 3$, odd and
$A=\{1,L,L+1\}$.
$W(n)=\II$ iff $n\equiv 0,2,4,\ldots,L-1 \pmod {2L+1}$.

\end{enumerate}
\end{example}

This paper is about the following variant of NIM which we refer to as {\it NIM with Cash}.

\begin{definition}
Let $A\finsubset\nat$ and let $n;d,e\ge 0$.
Let $a_1=\min\{A\}$.  $\NIM(A;n;d,e)$ is played as follows:
\begin{enumerate}
\item 
There are two players, Player \I and Player \IIns.
They alternate moves with Player I going first.
During a player's turn he removes $a\in A$ stones from the board and loses $a$ dollars.
\item
Initially there are $n$ stones on the board, Player I has $d$ dollars,
Player II has $e$ dollars.
\item
If a player cannot move he loses.  
If there are $s$ stones and the player has $f$ dollars and either $s<a_1$ or $f<a_1$ then the player loses.
\end{enumerate}
\end{definition}

\begin{notation}~
The expression {\it Player \I wins} means that Player I has a strategy that
will win regardless of what Player II does.
If the game is using the set $A\finsubset\nat$ 
then we write $\WAcash(n;d,e)=\I$ if Player \I  wins when
the board initially has $n$ stones, 
Player I has $d$ dollars, and Player II  has $e$ dollars.
Similar for {\it Player \II wins}.
If the game is understood we may simply use $W$.  
\end{notation}

\begin{notation}
If $A$ is a set then $\NIM(A)$ will mean the NIM-with-Cash game
with the set $A$.
\end{notation}

\begin{definition}
Let $A\finsubset\nat$.  Assume we are playing $\NIM(A)$.
Let $n,d,e\ge 0$.
We define the state of the game. In all cases it is Player I's turn.
\begin{enumerate}
\item
The game is in {\it state $(n;d,e)$}
if there are $n$ stones on the board, Player I has $d$ dollars,
and Player II has $e$ dollars.
\item
The game is in {\it state $(n;d,\UF)$}
if there are $n$ stones on the board, Player I has $d$ dollars and
Player II has unlimited funds.
State $(n;\UF,e)$ is defined similarly.
Note that $(n;\UF,\UF)$ is the standard NIM game.
\end{enumerate}
\end{definition}

\begin{definition}
Let $A\finsubset\nat$. We are concerned with $\NIM(A)$.
\begin{enumerate}
\item
If $W(n)=\I$  and Player I has enough money to 
play the strategy he would play to win in standard NIM
then we say that Player I  {\it wins normally}. 
Note that this is the same as saying $W(n;d, \infty) = \I$.
Similar for $W(n)=\II$.
\item
If either player wins by removing $\min(A)$ on every turn then we say 
he wins {\it miserly}.
The intuition is that he wins because the other 
player ran of money though formally this might
not be the case.
\end{enumerate}
\end{definition}

We give several examples of play.

\begin{example}
In all of the examples below $A=\{1,3,4\}$ and $n=14$.

\begin{enumerate}
\item
The game is in state $(14;\UF,10)$.
In standard NIM Player II would win $W(14)$  by always making sure that Player I
faces an $n\equiv 0,2 \pmod 7$.
By a case analysis one can show that 
Player II has enough money to play this strategy.
Hence Player II wins normally.
\item
The game is in state $(14;4,4)$. The reader can check that if Player II always removes one then
he will win miserly.
\item
The game is in state $(14;9,9)$.
We show that Player I wins using a strategy that begins  miserly but may
becomes normal.
Player I removes 1. Player II removes $a\in \{1,3,4\}$.

\begin{enumerate}
\item
If $a=4$ then the state is $(9;8,5)$.
The reader can verify that Player I wins miserly.

\item
If $a=3$ then the state is $(10;8,6)$. 
The reader can verify that Player I wins miserly or normally.

\item
If $a=1$ then the state is $(12;8,8)$.
Player I removes 1. Player II removes $a\in \{1,3,4\}$.
\begin{enumerate}
\item
If $a=4$ then the state is $(7;7,4)$. 
The reader can verify that Player I wins miserly.
\item
If $a=3$ then the state is $(8;7,5)$. 
The reader can verify that Player I wins miserly or normally.
\item
If $a=1$ then the state is $(10;7,7)$.
The reader can verify that Player I wins normally.
\end{enumerate}
\end{enumerate}
\end{enumerate}
\end{example}

The following lemma and definition will be useful throughout the entire paper.
The lemma is so ubiquitous that we will use it without mention.

\begin{lemma}\label{le:helpful}
Let $A\finsubset\nat$.
Let $d,e\in \nat$.
Assume that the game is $\NIM(A;d,e)$.
$$W(n;d,e)=\I \iff (\exists a\in A, a\le n,d)[W(n-a;e,d-a)=\IIns].$$
\end{lemma}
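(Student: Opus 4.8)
The plan is to read the biconditional off directly from the definition of a winning strategy, after first observing that $W(n;d,e)$ is a genuine truth value. This preliminary point matters: the notation $W(n;d,e)=\I$ presumes the game is determined. Determinacy is immediate from finiteness. Since $a_1=\min(A)\ge 1$, every legal move strictly decreases the number of stones, so play from state $(n;d,e)$ lasts at most $n$ moves, and a finite two-player perfect-information game with no draws is won by exactly one player (Zermelo). I would state this explicitly so that ``$W(n;d,e)\in\{\I,\IIns\}$'' is on firm footing and the equivalence is a statement about two honest truth values.

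For the forward direction I would suppose $W(n;d,e)=\I$, so Player \I has a strategy $\sigma$ that wins against every reply. In particular Player \I can move from $(n;d,e)$, since otherwise the losing rule (rule 3) makes Player \I lose; hence $\sigma$ prescribes a legal first move $a\in A$. Legality of removing $a$ stones while paying $a$ dollars is exactly $a\le n$ and $a\le d$, that is $a\le n,d$. After the move the board has $n-a$ stones and it is Player \II's turn, with Player \II holding $e$ dollars and Player \I holding $d-a$. Under the standing convention that a state lists the to-move player's cash first, this position is $(n-a;e,d-a)$. The tail of $\sigma$ lets the player who just moved (the original Player \I) win from here against all replies; in the swapped notation, where the to-move player is now the one called ``Player \I,'' this says precisely that the to-move player loses, i.e.\ $W(n-a;e,d-a)=\IIns$. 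So the right-hand side holds.

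For the reverse direction I would suppose there is $a\in A$ with $a\le n,d$ and $W(n-a;e,d-a)=\IIns$. Player \I's strategy is to remove $a$ stones, which is legal by $a\le n,d$, reaching $(n-a;e,d-a)$ with Player \II to move. By hypothesis the to-move player (Player \II) loses that subgame, so the other player (the original Player \I) has a winning strategy there; prepending the move $a$ to it yields a winning strategy from $(n;d,e)$, whence $W(n;d,e)=\I$.

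The hard part is purely the bookkeeping of the role-swap in the state notation, and I expect that to be the only place needing care: translating ``the mover wins the continuation'' into ``$=\IIns$ for the next position'' is what makes the $d$ and $e$ trade places and the $a$ get subtracted from the mover's cash. I would also check the degenerate case for consistency: if Player \I has no legal move from $(n;d,e)$ — equivalently $n<a_1$ or $d<a_1$ — then $W(n;d,e)=\IIns$ and the right-hand existential is vacuously false, so both sides fail together. Everything else is just unwinding the definition of a winning strategy, which is exactly why the lemma can thereafter be invoked silently.
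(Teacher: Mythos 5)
Your proof is correct. Note that the paper offers no proof of this lemma at all: it is introduced as ``so ubiquitous that we will use it without mention,'' i.e.\ treated as an immediate unwinding of the definitions. Your writeup supplies exactly the details left implicit there: legality of a move means $a\le n$ and $a\le d$; and the role-swap convention (a state always lists the to-move player's cash first) is what converts ``the mover wins the continuation'' into $W(n-a;e,d-a)=\II$, trading the places of $d$ and $e$ and charging $a$ to the mover. Two small remarks. First, the appeal to Zermelo determinacy is good hygiene but is strictly needed only to make $W$ a well-defined total function on states; the biconditional itself goes through without it, since both directions merely dissect or assemble explicit winning strategies (reading ``$W(x)=P$'' as ``$P$ has a winning strategy''). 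Second, your degenerate-case check --- if $n<a_1$ or $d<a_1$ there is no legal move, $W(n;d,e)=\II$, and the existential on the right is vacuously false --- is precisely the base case the paper silently relies on when it later runs inductions (e.g.\ in Theorems~\ref{th:poor1} and~\ref{th:main}), so spelling it out is worthwhile rather than pedantic.
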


\begin{definition}
Assume $W(n;d,e)=\I$.  
Let $a\in A$. If  $W(n-a;e,d-a)=\II$ then we call $a$ {\it a winning move}.
 If  $W(n-a;e,d-a)=\I$ then we call $a$ {\it a losing move}.
\end{definition}

We are interested in the following problem: Given $A$ find a win condition
for $\NIM(A)$. One could write a dynamic program that, on input
$(n;d,e)$, determines who wins in $O(n^3)$ arithmetic operations, but we want
our win conditions to be simpler than that.
\begin{definition}
Let $A$ be a finite set. A {\it win condition for $\NIM(A)$ } is a
polynomial time function of the length of $(n;d,e)$. Since $n,d,e$ are in binary
we want a polynomial time function of $O(\log(nde))$.
\end{definition}

In Section~\ref{se:rich} we define "rich" and explore the case where at least one player is rich.
In Section~\ref{se:poor} we define "poor" and explore the case where at least one player is  poor.
In Section~\ref{se:middleclass} we explore the case where neither player is rich or poor.
The theorems proven allow one to obtain nice win conditions for many sets $A$.
In Sections~\ref{se:1L}, \ref{se:1LLodd}, \ref{se:1LLeven} we obtain win conditions for
$A=\{1,L\}$, $A=\{1,L,L+1\}$ ($L$ odd), and 
$A=\{1,L,L+1\}$ ($L$ even).
In Section~\ref{se:LM} we state a conjecture about the set $A=\{L,\ldots,M\}$.
In Section~\ref{se:questions} we suggest  future directions.

\section{What if At Least One Player Is Rich?}\label{se:rich}

If $W_A(n)=\I$ then how much money does Player \I need to win normally
starting with $n$ stones? A similar question could be asked about Player \IIns.
In this section we define $\fI(n)$ and $\fII(n)$ to be those amounts.
We then consider what happens if (say) Player \I has $\fI(n)-1$ dollars.
How much does Player \II need to snatch victory from the jaws of defeat?

For $n$ such that $W_A(n)=\I$ we define $\fI(n)$.
Later we will see that this $\fI(n)$ is the least $d$ such that
Player \I wins $(n;d,\UF)$. Similarly for $W_A(n) = \II$.

\begin{definition}\label{de:ff}
Let $A\finsubset \nat$ and let $a_1=\min(A)$.
\begin{enumerate}
\item
For $0\le n\le a_1-1$
$\fII(0)=0$ (Player \II wins and needs 0 to win.)
\item
If $W_A(n)=\I$ then

$$\fI(n)= \min_{a\in A, a\le n}\{ \fII(n-a)+a \st  W_A(n-a)=\II \}$$

\item

If $W_A(n)=\II$ then

$$\fII(n)= \max_{a\in A, a\le n}\{ \fI(n-a)\}$$
\end{enumerate}
\end{definition}

The following is a straightforward proof by induction.

\begin{theorem}\label{th:rich}
Let $A\finsubset \nat$ and $\fI$, $\fII$ be as defined above.
\begin{enumerate}
\item
If $W_A(n)=\I$ then Player \I wins $(n;\fI(n),\UF)$.
\item
If $W_A(n)=\II$ then Player \II wins $(n;\UF,\fII(n))$.
\end{enumerate}
\end{theorem}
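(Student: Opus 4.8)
The plan is to prove both clauses simultaneously by induction on $n$, but with a slightly strengthened hypothesis that folds in an obvious monotonicity fact: having more money never hurts. Concretely I would prove that if $W_A(n)=\I$ then Player \I wins $(n;d,\UF)$ for \emph{every} $d\ge\fI(n)$, and that if $W_A(n)=\II$ then Player \II wins $(n;\UF,e)$ for \emph{every} $e\ge\fII(n)$. The reason to carry the quantifier ``for every $d\ge\fI(n)$'' rather than just $d=\fI(n)$ is that in the inductive step the opponent's move leaves us in a position where our surviving cash is only known to be \emph{at least} the relevant $f$-value, not exactly equal to it; baking the inequality into the statement is what lets the induction close without a separate monotonicity lemma.

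The base case is $0\le n\le a_1-1$. Here no legal move exists, so the player to move loses; thus $W_A(n)=\II$ and $\fII(n)=0$, and Player \II wins $(n;\UF,e)$ for every $e\ge 0$ since Player \I cannot move. The first clause is vacuous in this range.

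For the inductive step, assume the strengthened claim for all $m<n$ and split on the value of $W_A(n)$. If $W_A(n)=\I$, let $a^\ast$ attain the minimum defining $\fI(n)$, so that $W_A(n-a^\ast)=\II$ and $\fI(n)=\fII(n-a^\ast)+a^\ast$. Given any $d\ge\fI(n)$, Player \I removes $a^\ast$; this is legal because $a^\ast\le n$ and $a^\ast\le\fI(n)\le d$. Tracking the role-swap of the state notation (the $\UF$ analogue of Lemma~\ref{le:helpful}), the resulting position has the original Player \II to move and is written $(n-a^\ast;\UF,d-a^\ast)$. Since $d-a^\ast\ge\fI(n)-a^\ast=\fII(n-a^\ast)$ and $W_A(n-a^\ast)=\II$, clause~2 of the induction hypothesis at $n-a^\ast$ says the player to move there loses, i.e.\ the original Player \I wins. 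If instead $W_A(n)=\II$, then $W_A(n-a)=\I$ for every legal $a$. Given any $e\ge\fII(n)$, whatever $a$ the (unlimited) Player \I removes, the position becomes $(n-a;e,\UF)$ with the original Player \II to move; since $e\ge\fII(n)=\max_{a'\le n}\fI(n-a')\ge\fI(n-a)$, clause~1 of the induction hypothesis at $n-a$ gives that the player to move there, namely the original Player \II, wins. As this holds for every move available to Player \I, Player \II wins from $(n;\UF,e)$.

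The main thing to get right is the bookkeeping of the role-swap inherent in the state notation: the state $(m;x,y)$ always has the mover in the ``Player \I'' slot, so after a move the two players exchange both their labels and their cash entries, and one must apply the correct clause of the induction hypothesis (clause~2 to conclude the \emph{non}-mover wins in the first case, clause~1 to conclude the \emph{mover} wins in the second). The only genuine inequality to check is $\fII(n)\ge\fI(n-a)$, which is immediate from the $\max$ in the definition of $\fII$; this is precisely the point at which carrying ``for every $e\ge\fII(n)$'' in the hypothesis pays off, and it is the one place where the naive statement with equality would fail to go through.
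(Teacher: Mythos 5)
Your proof is correct and follows essentially the same route as the paper's: induction on $n$, with the key inequalities $d-a^\ast\ge\fII(n-a^\ast)$ (via the argmin in the definition of $\fI$) and $\fII(n)\ge\fI(n-a)$ (via the $\max$ in the definition of $\fII$). Your explicit strengthening to ``for every $d\ge\fI(n)$'' is exactly what the paper carries implicitly with its ``$\ge$'' notation in the induction hypothesis, and your use of the minimizing $a^\ast$ (rather than merely the least $a$ with $W_A(n-a)=\II$) is the correct choice for the induction to close.
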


Note that $\fI$ 
($\fII$) is only defined when on $n$ such that $W_A(n)=\I$
($W_A(n)=\II$) and we do not know what happens if (say) $W_A(n)=\I$ but
Player \I has $\fI(n)-1$. We now complete the definitions of $\fI$ and $\fII$ to
deal with these questions. 

\begin{definition}\label{de:f}
Let $A\finsubset \nat$. Let $a_1=\min(A)$.
\begin{enumerate}
\item
If $W_A(n)=\I$ ($W_A(n)=\II$) then $\fI(n)$ ($\fII(n)$) is defined as in Definition~\ref{de:ff}.
\item
If $0\le n\le a_1-1$ then $\fI(n)=0$.
\item
If $W_A(n) = \II$ then 

\[\fI(n) = \min_{a\in A, a\le n}\{\fII(n-a) + a: \fI(n-a) = \fII(n)\}.\]

Since $\fII(n)= \max_{a\in A, a\le n}\{ \fI(n-a)\}$ we know that the set of $a\in A$,
$q\le n$ such that $\fI(n-a)=\fII(n)$ is not empty.

\item
It $W_A(n) = \I$ then

\[\fII(n) = \max_{a\in A, a\le n} \{\fI(n-a)\}.\]

\end{enumerate}
\end{definition}

The following theorem has an easy proof that uses
Theorem~\ref{th:rich} and a straightforward induction.

\begin{theorem}\label{th:wrong}
Let $A, n, d, e$ be given. Let $\fI,\fII$ be as defined above.
\begin{enumerate}
\item If $d \geq \fI(n)$ and $e < \fII(n)$ then $\WAcash(n;d,e) = \I$.
\item If $d < \fI(n)$ and $e \geq \fII(n)$ then $\WAcash(n;d,e) = \II$
\item If $d \geq \fI(n)$ and $e \geq \fII(n)$ then $\WAcash(n;d,e) = W_A(n)$.
(This follows from Theorem~\ref{th:rich}. We include it so that we can just
refer to this theorem for all cases where at least one player is rich.)
\end{enumerate}
\end{theorem}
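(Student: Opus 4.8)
The plan is to prove all three parts simultaneously by strong induction on $n$, using Lemma~\ref{le:helpful} to pass from a position $(n;d,e)$ to the positions $(n-a;e,d-a)$ (keeping in mind the role-swap in that lemma: the new mover is the old Player II, holding $e$, and the new non-mover is the old Player I, holding $d-a$), to which the induction hypothesis applies at $n-a<n$. The base case is $0\le n\le a_1-1$: here the mover cannot move, so $\WAcash(n;d,e)=\II=W_A(n)$ for all $d,e$; since $\fI(n)=\fII(n)=0$, parts 1 and 2 are vacuous and part 3 holds.

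For the induction step I would split on the value of $W_A(n)$ and first dispose of the cases in which the player favored by standard NIM is also the rich player. By Theorem~\ref{th:rich}, if $W_A(n)=\I$ then Player I wins $(n;\fI(n),\UF)$, and if $W_A(n)=\II$ then Player II wins $(n;\UF,\fII(n))$. Combining this with the monotonicity observation that giving a player more money, or the opponent less, can never turn a win into a loss — itself a one-line induction — immediately yields part 3 in both cases, part 1 when $W_A(n)=\I$, and part 2 when $W_A(n)=\II$.

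This leaves the two genuinely interesting and mutually symmetric subcases, which are the heart of the argument: part 1 when $W_A(n)=\II$, and part 2 when $W_A(n)=\I$. I would treat part 1 with $W_A(n)=\II$ as follows. Here $\fI(n)=\min\{\fII(n-a)+a : \fI(n-a)=\fII(n)\}$; let $a^*$ achieve this minimum, so $\fI(n)=\fII(n-a^*)+a^*$ and $\fI(n-a^*)=\fII(n)$. Since $d\ge\fI(n)\ge a^*$, the move $a^*$ is legal, and I claim it is winning. After Player I plays $a^*$ the position is $(n-a^*;e,d-a^*)$, where the new mover (old Player II) holds $e<\fII(n)=\fI(n-a^*)$ and the new non-mover (old Player I) holds $d-a^*\ge\fI(n)-a^*=\fII(n-a^*)$. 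Thus the new mover is poor and the new non-mover is rich, so the induction hypothesis (part 2, at $n-a^*$) gives $\WAcash(n-a^*;e,d-a^*)=\II$; by Lemma~\ref{le:helpful} this says exactly that $a^*$ is a winning move, so $\WAcash(n;d,e)=\I$.

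Part 2 with $W_A(n)=\I$ is the mirror image, and I would argue that every legal move of Player I lands in a position won by its mover. Fix any legal $a$; afterward the position is $(n-a;e,d-a)$ whose mover (old Player II) holds $e\ge\fII(n)=\max_{a'}\fI(n-a')\ge\fI(n-a)$ and is therefore rich. If $W_A(n-a)=\I$ the induction hypothesis (parts 1 and 3) already gives the mover a win; if $W_A(n-a)=\II$ then, because $d<\fI(n)\le\fII(n-a)+a$, the new non-mover holds $d-a<\fII(n-a)$ and is poor, so the induction hypothesis (part 1) again gives the mover a win. Hence no move of Player I reaches a position of the kind Lemma~\ref{le:helpful} requires for a Player I win, and $\WAcash(n;d,e)=\II$. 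I expect the main obstacle to be bookkeeping rather than ideas: keeping the role-swap of Lemma~\ref{le:helpful} straight, and checking that the extended definitions of $\fI$ and $\fII$ in Definition~\ref{de:f} line up exactly with the rich/poor hypotheses needed to invoke the correct part of the induction hypothesis — in particular, that choosing the move realizing $\fII(n)=\fI(n-a^*)$ is precisely what guarantees the opponent is left poor.
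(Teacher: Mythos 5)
Your proof is correct and takes essentially the same route as the paper's: strong induction on $n$, with Theorem~\ref{th:rich} (plus monotonicity in money) handling every case where the standard-NIM winner is the rich player, the extended Definition~\ref{de:f} supplying the winning move $a^*$ for part 1 when $W_A(n)=\II$, and the role-swapped induction hypothesis killing every move of Player \I for part 2 when $W_A(n)=\I$. If anything you are slightly more careful than the paper, whose argument for part 2 applies the bound $\fI(n)\le\fII(n-a)+a$ to every legal $a$ even though Definition~\ref{de:ff} only guarantees it when $W_A(n-a)=\II$; your case split on $W_A(n-a)$, using richness of the new mover alone when $W_A(n-a)=\I$, closes that small gap.
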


\section{What if At Least One Player Is Poor?}\label{se:poor}

Let $A\finsubset\nat$ and $n,d,e\in\nat$ be such that $d$ and $e$ are small--
so small that the best strategy is to play miserly. If $d\le e$ then Player II will win since
Player I will run out of money first. If $d>e$ then it is not clear what happens.

We will define formulas and state a theorem that will make this all rigorous.
In the end we will have determined which player wins if at least one player is 
poor.

\begin{definition}\label{de:g}
Let $A$, $n$ be given. Set $a_1 = \min(A)$. Let $i\equiv n \mod {2a_1}$. Then:
\begin{itemize}
\item $\gI(n) = \frac{n-i}{2} + \min\{i+1,a_1\}$.
\item $\gII(n) = \frac{n-i}{2} + \max\{0, i-a_1 + 1\}$.
\end{itemize}
\end{definition}

From the definitions of $\gI(n)$ and $\gII(n)$ one can easily prove the following.

\begin{lemma}\label{le:first}
For all $n, k$, we have $\gI(n+2ka_1) = \gI(n) + ka_1$ and $\gII(n+2ka_1) = \gII(n)+ka_1$.
\end{lemma}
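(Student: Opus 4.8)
The plan is to observe that the entire statement hinges on a single fact: the residue $i$ appearing in Definition~\ref{de:g} is unchanged when $n$ is replaced by $n + 2ka_1$. First I would note that $i$ is determined by the congruence $i \equiv n \pmod{2a_1}$ together with the range convention $0 \le i < 2a_1$, and that $2ka_1 \equiv 0 \pmod{2a_1}$; hence the value of $i$ associated to $n + 2ka_1$ is literally the same $i$ as the one associated to $n$. This is the only step that requires any care, and it is immediate from the definition.

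With $i$ held fixed, the two quantities $\min\{i+1, a_1\}$ and $\max\{0,\, i - a_1 + 1\}$ depend only on $i$ and $a_1$, so they contribute identical summands to $\gI(n)$ and $\gI(n+2ka_1)$ (respectively to $\gII(n)$ and $\gII(n+2ka_1)$). Thus the only change comes from the first summand, which I would compute as
\[
\frac{(n + 2ka_1) - i}{2} = \frac{n - i}{2} + ka_1 ,
\]
using $2ka_1/2 = ka_1$. Note that $(n-i)/2$ is genuinely an integer, since $n \equiv i \pmod{2a_1}$ forces $n-i$ to be a multiple of $2a_1$ and in particular even; this is worth recording so that the formulas make sense, but it is not an obstacle.

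Combining the two observations then gives
\[
\gI(n + 2ka_1) = \frac{n-i}{2} + ka_1 + \min\{i+1, a_1\} = \gI(n) + ka_1 ,
\]
and identically
\[
\gII(n + 2ka_1) = \frac{n-i}{2} + ka_1 + \max\{0,\, i - a_1 + 1\} = \gII(n) + ka_1 .
\]
There is no real difficulty here: the content of the lemma is simply that both formulas increase by exactly $ka_1$ across $k$ blocks of width $2a_1$, and the invariance of $i$ modulo $2a_1$ does all of the work. If anything is a potential pitfall, it is only the bookkeeping around the range of $i$ and the integrality of $(n-i)/2$, both of which follow directly from the congruence defining $i$.
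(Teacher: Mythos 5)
Your proof is correct and is exactly the straightforward computation the paper has in mind (the paper simply asserts the lemma follows easily from Definition~\ref{de:g} without writing it out). The key observation --- that the residue $i$ modulo $2a_1$ is unchanged under $n \mapsto n + 2ka_1$, so only the $\frac{n-i}{2}$ term shifts, by exactly $ka_1$ --- is the whole argument, and you have it.
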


\begin{lemma}\label{le:d}
Let $a\in A$.  
If $d<\gI(n)$ then $d-a<\gII(n-a)$.
\end{lemma}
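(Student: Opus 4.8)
The plan is to deduce the lemma from the cleaner inequality $\gI(n) \le \gII(n-a) + a$. Indeed, once this is known, the hypothesis $d < \gI(n) \le \gII(n-a) + a$ gives $d - a < \gII(n-a)$ immediately. So the entire task reduces to proving $\gI(n) - \gII(n-a) \le a$, and I will work only with the closed forms from Definition~\ref{de:g}.

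Set $i = n \bmod 2a_1$ and $j = (n-a)\bmod 2a_1$ (I treat the relevant case $0 \le a \le n$, so both $n$ and $n-a$ are nonnegative). Writing $q = \frac{n-i}{2a_1}$ and $q' = \frac{(n-a)-j}{2a_1}$, which are nonnegative integers, the definitions give $\gI(n) = a_1 q + \min\{i+1,a_1\}$ and $\gII(n-a) = a_1 q' + \max\{0, j - a_1 + 1\}$. Subtracting, and using $a = n-(n-a) = 2a_1(q - q') + (i - j)$, the target inequality rearranges to
\[ g(i) + h(j) \le a_1(q-q'), \]
where $g(i) = \min\{i+1,a_1\} - i = \min\{1, a_1 - i\}$ and $h(j) = j - \max\{0, j-a_1+1\} = \min\{j, a_1 - 1\}$. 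The key elementary observations will be that $g(i) \le 1$ and $h(j) \le a_1 - 1$, hence $g(i) + h(j) \le a_1$, and that the right-hand side $a_1(q-q')$ is an integer multiple of $a_1$.

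To finish I would split on $q - q'$. From $a = 2a_1(q - q') + (i - j)$ together with $a \ge a_1 > 0$ and $|i-j| < 2a_1$, it follows that $q - q' \ge 0$. If $q - q' \ge 1$, then $a_1(q - q') \ge a_1 \ge g(i) + h(j)$ and we are done. The only remaining case is $q = q'$, where the right-hand side collapses to $0$; here $a = i - j$ forces $i = a + j \ge a_1$, so $g(i) = a_1 - i \le 0$, and $j = i - a < a_1$, so $h(j) = j$, whence $g(i) + h(j) = a_1 - a \le 0$, exactly as needed.

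The main obstacle is precisely the case $q = q'$: the crude bound $g(i) + h(j) \le a_1$ is useless when the right-hand side is $0$, and one must exploit the arithmetic relation linking $i$, $j$, and $a$ to see that in this regime $i$ is forced to be large (at least $a_1$) while $j$ is small (below $a_1$), so that the two $\min$/$\max$ terms conspire to give $g(i) + h(j) = a_1 - a \le 0$. Apart from this point, the argument is routine bookkeeping with the floor decompositions and the two piecewise-linear definitions.
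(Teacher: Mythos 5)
Your proof is correct, and it takes a genuinely different route from the paper's. Both arguments reduce the lemma to the same non-strict inequality $\gI(n) \le \gII(n-a) + a$, but the paper proves it by splitting on the magnitude of $a$: two separate claims for $a_1 \le a < 2a_1$ and $2a_1 \le a < 3a_1$ (each settled by residue case analysis left to the reader), followed by a reduction of general $a \ge 2a_1$ to those claims via the periodicity identity of Lemma~\ref{le:first}. You instead absorb all values of $a$ into one computation: writing both sides in terms of quotients and remainders mod $2a_1$, the inequality becomes $g(i) + h(j) \le a_1(q - q')$ with $g(i) = \min\{1, a_1 - i\} \le 1$ and $h(j) = \min\{j, a_1 - 1\} \le a_1 - 1$, so the crude bound $g + h \le a_1$ disposes of every case with $q - q' \ge 1$ at once, and the only delicate case is $q = q'$, where the relation $a = i - j$ forces $i \ge a_1$ and $j < a_1$, giving $g(i) + h(j) = a_1 - a \le 0$. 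Your single split on $q - q'$ (after correctly ruling out $q - q' < 0$) replaces the paper's three-part structure, makes no appeal to Lemma~\ref{le:first}, and leaves no "easy algebra" unverified; the paper's version, in exchange, reuses the periodicity lemma it has already established and keeps each individual verification very short.
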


\begin{proof}
We show that $\gI(n) - a \leq \gII(n-a)$.

\noindent
{\bf Claim 1:} If $a\in A$ and $a<2a_1$ then $\gI(n) -a \leq \gII(n-a)$.

\noindent
{\bf Proof of Claim 1:}

Let $a = a_1 + j$ where $0 \leq j < a_1$. 
Let $i \equiv n \pmod{2a_1}$ and let $i' \equiv n-a \pmod{2a_1} = (i-a_1-j) \pmod{2a_1}$.

We need: $\frac{n-i}{2} + \min\{i+1, a_1\} - a \leq \frac{n-a-i'}{2} + \max\{0, i'-a_1 + 1\}$, i.e. \\ $\frac{n-i}{2} + \min\{i+1, a_1\} \leq \frac{n-i'}{2} + \max\{0, i'-a_1 + 1\} + \frac{a_1 + j}{2}$.

There are two cases, depending on whether $i \geq a_1 + j$ or $i < a_1 + j$. We leave the easy algebra to the reader.

\noindent
{\bf End of Proof of Claim 1}

\noindent {\bf Claim 2:} If $a \in A$ and $2a_1 \leq a < 2a_1 + a_1$ then $\gI(n) - a \leq \gII(n-a)$.

\noindent {\bf Proof of Claim 2:} 

Let $a = 2a_1 + j$ where $0 \leq j < a_1$. 
Let $i \equiv n \pmod{2a_1}$ and let $i' \equiv n-a \pmod{2a_1} = (i-j) \pmod{2a_1}$.

We need: $\frac{n-i}{2} + \min\{i+1, a_1\} - a \leq \frac{n-a-i'}{2} + \max\{0, i'-a_1 + 1\}$, i.e. \\ $\frac{n-i}{2} + \min\{i+1, a_1\} \leq \frac{n-i'}{2} + \max\{0, i'-a_1 + 1\} + a_1 + \frac{ j}{2}$.

There are two cases, depending on whether $i \geq  j$ or $i < j$. We leave the easy algebra to the reader.

\noindent {\bf End of proof of Claim 2}

We now prove the theorem for $a\ge 2a_1$.
Let $a-a_1=2ka_1+i$ where $0\le i\le 2a_1=1$.

\noindent
$\gII(n-a) = \gII(n- a_1 -i -2ka_1) = \gII(n-a_1-i) - ka_1 \hbox{ by Lemma~\ref{le:first} }.$

\noindent
$\gII(n-(a_1+i)) - ka_1 \geq \gII(n-a_1-i) - 2ka_1 \geq \gI(n) - i-a_1 - 2ka_1 = \gI(n) - a \hbox{ by Claims 1 and 2 }.$

Hence $\gII(n-a) \ge \gI(n)-a$.
\end{proof}

\begin{lemma}\label{le:e}
If $e<\gII(n)$ then $e<\gI(n-a_1)$.
\end{lemma}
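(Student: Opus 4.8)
The plan is to prove the stronger identity $\gII(n) = \gI(n-a_1)$, from which the lemma follows immediately: if $e<\gII(n)$ and $\gII(n)=\gI(n-a_1)$, then $e<\gI(n-a_1)$. We may assume $n\ge a_1$, since for $0\le n<a_1$ we have $\gII(n)=0$ and the hypothesis $e<\gII(n)$ is vacuous for $e\in\nat$.

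First I would fix notation. Let $i$ be the representative of $n$ modulo $2a_1$ with $0\le i<2a_1$, so the formulas of Definition~\ref{de:g} apply directly. The only subtlety is that the residue $i'$ of $n-a_1$ modulo $2a_1$ depends on the size of $i$: when $i\ge a_1$ we have $i'=i-a_1$, whereas when $i<a_1$ the residue wraps around to $i'=i+a_1$. So I would split into these two cases and evaluate both $\gII(n)$ and $\gI(n-a_1)$ in each.

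In the case $i\ge a_1$: here $i-a_1+1\ge 1>0$, so $\gII(n)=\frac{n-i}{2}+(i-a_1+1)$. Since $i'=i-a_1$ satisfies $0\le i'<a_1$, we get $\min\{i'+1,a_1\}=i'+1=i-a_1+1$ and $(n-a_1)-i'=n-i$, so $\gI(n-a_1)=\frac{n-i}{2}+(i-a_1+1)$; the two sides agree. In the case $i<a_1$: here $\max\{0,i-a_1+1\}=0$, so $\gII(n)=\frac{n-i}{2}$. Now $i'=i+a_1$ satisfies $a_1\le i'<2a_1$, so $\min\{i'+1,a_1\}=a_1$ and $(n-a_1)-i'=n-i-2a_1$, giving $\gI(n-a_1)=\frac{n-i-2a_1}{2}+a_1=\frac{n-i}{2}$; again the two sides agree.

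There is no real obstacle here: the argument is a short computation once the two residue cases are separated, and the only point requiring care is tracking how the representative of $n-a_1$ wraps around relative to $i$. I expect the cleanest writeup to simply record the two-line evaluation of each side in each case, conclude $\gII(n)=\gI(n-a_1)$, and then invoke this equality to finish.
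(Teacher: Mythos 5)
Your proof is correct and follows essentially the same route as the paper: the paper's own proof also observes that $\gII(n)$ and $\gI(n-a_1)$ are in fact equal, splits on whether the residue $i$ of $n$ modulo $2a_1$ satisfies $i<a_1$ or $i\ge a_1$, and checks the identity in each case (leaving the algebra to the reader, which you carry out explicitly). Your additional remark disposing of the degenerate range $0\le n<a_1$ by vacuousness is a harmless refinement, not a different approach.
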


\begin{proof}

We need to prove that $\gII(n)\le \gI(n-a_1)$. But in fact they are equal.

Let $i = n \pmod{2a_1}$ and let $i' = n-a_1 \pmod{2 a_1}$.

We need to show $\frac{n-i}{2} + \max\{0, i-a_1 + 1\} = \frac{n-a_1-i'}{2} + \min\{i'+1, a_1\}$.

There are two cases depending on whether $0\le i \le  a_1-1$ or $a_1\le i\le 2a_1-1$.
We leave the easy algebra to the reader.

\end{proof}

\begin{theorem}\label{th:poor1}
Let $A, n, d, e$ be given. Then:
\begin{enumerate}
\item If $e < \gII(n)$ and $\floor{\frac{d}{a_1}} > \floor{\frac{e}{a_1}}$ 
then $\WAcash(n;d,e)=\I$.
\item If $d < \gI(n)$ and $\floor{\frac{d}{a_1}} \le \floor{\frac{e}{a_1}}$ 
then $\WAcash(n;d,e) = \II$.
\end{enumerate}
We refer to the above statements as Parts.
\end{theorem}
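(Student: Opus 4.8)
The plan is to prove both Parts simultaneously by induction on $n$, exploiting the fact that Lemmas~\ref{le:d} and~\ref{le:e} were tailored precisely to drive this induction. The key organizing principle is the role-swap built into Lemma~\ref{le:helpful}: after the player to move removes $a$, play passes to state $(n-a;e,d-a)$, in which the \emph{new} first player is the old Player \IIns. Thus Part 1 (a win for the mover) reduces to Part 2 at a smaller $n$, and Part 2 reduces to Part 1; this is why the two Parts must be carried together in one induction. For the base case $0\le n\le a_1-1$ I would note that $i=n$, so $\gII(n)=\max\{0,n-a_1+1\}=0$ and $\gI(n)=\min\{n+1,a_1\}=n+1$. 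Then the hypothesis $e<\gII(n)=0$ of Part 1 is impossible, so Part 1 holds vacuously; and Part 2 holds because the mover cannot move ($n<a_1$) and loses outright, giving $\WAcash(n;d,e)=\II$.

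For the induction step of Part 1, assume $n\ge a_1$, $e<\gII(n)$, and $\floor{\frac{d}{a_1}}>\floor{\frac{e}{a_1}}$. First I would check that the miserly move $a_1$ is legal: since $\floor{\frac{d}{a_1}}\ge 1$ we get $d\ge a_1$, and $n\ge a_1$ by assumption. I then claim $\WAcash(n-a_1;e,d-a_1)=\II$, which by Lemma~\ref{le:helpful} yields $\WAcash(n;d,e)=\I$. To establish the claim I apply Part 2 of the induction hypothesis at $n-a_1$, with roles swapped so that the mover's stake is $e$ and the opponent's is $d-a_1$. Its two hypotheses are verified directly: $e<\gI(n-a_1)$ is exactly Lemma~\ref{le:e} applied to $e<\gII(n)$; and $\floor{\frac{e}{a_1}}\le\floor{\frac{d-a_1}{a_1}}=\floor{\frac{d}{a_1}}-1$ follows from $\floor{\frac{d}{a_1}}>\floor{\frac{e}{a_1}}$.

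For the induction step of Part 2, assume $n\ge a_1$, $d<\gI(n)$, and $\floor{\frac{d}{a_1}}\le\floor{\frac{e}{a_1}}$. By Lemma~\ref{le:helpful} it suffices to show that \emph{every} legal move $a\in A$ (with $a\le n$ and $a\le d$) leads to $\WAcash(n-a;e,d-a)=\I$; if there are no legal moves then $d<a_1$, the mover loses, and $\WAcash(n;d,e)=\II$ holds trivially. For a legal $a$ I apply Part 1 of the induction hypothesis at $n-a$, again with roles swapped. Its first hypothesis $d-a<\gII(n-a)$ is exactly Lemma~\ref{le:d} applied to $d<\gI(n)$, and its second hypothesis $\floor{\frac{e}{a_1}}>\floor{\frac{d-a}{a_1}}$ follows because $a\ge a_1$ gives $\floor{\frac{d-a}{a_1}}\le\floor{\frac{d-a_1}{a_1}}=\floor{\frac{d}{a_1}}-1<\floor{\frac{d}{a_1}}\le\floor{\frac{e}{a_1}}$.

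The step I expect to require the most care is not any single computation: each inequality collapses immediately to one of Lemmas~\ref{le:d} and~\ref{le:e} together with the floor identity $\floor{\frac{d-a_1}{a_1}}=\floor{\frac{d}{a_1}}-1$. The real obstacle is the bookkeeping of the role-swap, namely keeping straight that in the recursive state $(n-a;e,d-a)$ the quantity playing the part of ``$d$'' is $e$ and the quantity playing the part of ``$e$'' is $d-a$, and then invoking the \emph{other} Part of the hypothesis. Getting this matching right is exactly what makes the money-threshold conditions (via Lemmas~\ref{le:d} and~\ref{le:e}) and the two money-count conditions line up.
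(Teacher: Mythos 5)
Your proof is correct and follows essentially the same route as the paper's: induction on $n$ with the two Parts proved simultaneously, Part 1 reducing to Part 2 via the miserly move $a_1$ and Lemma~\ref{le:e}, Part 2 reducing to Part 1 for every legal move via Lemma~\ref{le:d}, with the same floor identity $\floor{\frac{d-a_1}{a_1}}=\floor{\frac{d}{a_1}}-1$ handling the money counts. Your extra care about move legality (checking $d\ge a_1$ in Part 1 and the no-legal-move case in Part 2) is a small refinement the paper leaves implicit, and you even use $\gII(n-a)$ where the paper's text has a typo ($\gI$).
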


\begin{proof}
We prove this by induction on $n$.

\noindent
{\bf Base Case:} Assume $0\le n\le a_1-1$.
Note that $n=i$ so $\gI(n)=\min(n+1,a_1)=n+1$, 
$\gII(n)=\max(0,n-a_1+1)=0$.
Part 1 cannot occur since its premise is $e<0$.
Part 2 has as a premise $d<\gI(n)=n+1\le a_1$, 
hence Player \I cannot move so he loses.

\noindent
{\bf Induction Step:} Assume $n\ge a_1$ and that for all $n'<n$ the lemma holds.

\noindent
{\bf Part 1)} 
We show that Player \I removing $a_1$ is a winning move. We by show $\WAcash(n-a_1;e,d-a_1)=\II$ by inducting into Part 2.
By Lemma~\ref{le:e} $e<\gI(n-a_1)$.
From $\floor{\frac{d}{a_1}} > \floor{\frac{e}{a_1}}$ 
one can deduce
$\floor{\frac{d-a_1}{a_1}} \ge \floor{\frac{e}{a_1}}$.

\noindent
{\bf Part 2)} We show $(\forall a\in A)[\WAcash(n-a;e,d-a)=\I]$ 
by inducting into Part 1.
By Lemma~\ref{le:d} $d-a<\gI(n-a)$.
From $\floor{\frac{d}{a_1}} \le \floor{\frac{e}{a_1}}$ 
one can deduce
$\floor{\frac{e}{a_1}} > \floor{\frac{d-a_1}{a_1}}$.
\end{proof}

We prove a lemma which will show that Theorem~\ref{th:poor1} covered
all the cases. We will then state a clean Theorem where all the cases
are clearly spelled out.

\begin{lemma}\label{le:helppoor}
Let $A, n, d, e$ be given. Then:
\begin{enumerate}
\item
If $e<\gII(n)$ and $d\ge \gI(n)$ 
then $\floor{\frac{d}{a_1}} > \floor{\frac{e}{a_1}}$.
\item
If $d<\gI(n)$ and $e\ge \gII(n)$ 
then $\floor{\frac{d}{a_1}} \le \floor{\frac{e}{a_1}}$.
\end{enumerate}
\end{lemma}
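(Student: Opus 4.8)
The plan is to reduce both parts to direct computations of $\floor{\cdot/a_1}$ after first putting $\gI(n)$ and $\gII(n)$ into a normal form. Write $i\equiv n\pmod{2a_1}$ with $0\le i\le 2a_1-1$. Since $n-i$ is then a nonnegative multiple of $2a_1$, the quantity $\frac{n-i}{2}$ is a multiple of $a_1$; I would write $\frac{n-i}{2}=ma_1$ for an integer $m\ge 0$. In this notation $\gI(n)=ma_1+\min\{i+1,a_1\}$ and $\gII(n)=ma_1+\max\{0,i-a_1+1\}$.

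Next I would split on the size of $i$ to resolve the $\min$ and the $\max$. If $0\le i\le a_1-1$ then $\min\{i+1,a_1\}=i+1$ and $\max\{0,i-a_1+1\}=0$, so $\gI(n)=ma_1+(i+1)$ with $1\le i+1\le a_1$, and $\gII(n)=ma_1$. If $a_1\le i\le 2a_1-1$ then $\min\{i+1,a_1\}=a_1$ and $\max\{0,i-a_1+1\}=i-a_1+1$, so $\gI(n)=(m+1)a_1$ and $\gII(n)=ma_1+(i-a_1+1)$ with $1\le i-a_1+1\le a_1$. In both cases each of $\gI(n)$ and $\gII(n)$ is now displayed as $ma_1$ (or $(m+1)a_1$) plus a remainder lying in $\{0,\dots,a_1\}$, from which the relevant floors can be read off immediately.

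Then, using that $\floor{\cdot/a_1}$ is monotone nondecreasing on the integers, I would replace the hypotheses on $d,e$ by the corresponding boundary statements. For Part 1, $d\ge\gI(n)$ gives $\floor{d/a_1}\ge\floor{\gI(n)/a_1}$ and $e<\gII(n)$ gives $\floor{e/a_1}\le\floor{(\gII(n)-1)/a_1}$, so it suffices to verify $\floor{\gI(n)/a_1}>\floor{(\gII(n)-1)/a_1}$. For Part 2, $d<\gI(n)$ gives $\floor{d/a_1}\le\floor{(\gI(n)-1)/a_1}$ and $e\ge\gII(n)$ gives $\floor{e/a_1}\ge\floor{\gII(n)/a_1}$, so it suffices to verify $\floor{(\gI(n)-1)/a_1}\le\floor{\gII(n)/a_1}$. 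Plugging the two normal forms into these floors settles every case: in Part 2 one finds $\floor{(\gI(n)-1)/a_1}=m$ in both cases while $\floor{\gII(n)/a_1}\ge m$, and in Part 1 one finds $\floor{\gI(n)/a_1}\ge m$ while $\floor{(\gII(n)-1)/a_1}\le m-1$.

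The only genuinely delicate point is that the computation $\floor{(\gII(n)-1)/a_1}=m-1$ needed in Part 1, Case $0\le i\le a_1-1$, requires $m\ge 1$; but this is automatic, since the hypothesis $e<\gII(n)=ma_1$ together with $e\ge 0$ forces $ma_1>0$, hence $m\ge 1$. The remaining care is purely in evaluating the floors at the two boundary values $i=a_1-1$ and $i=2a_1-1$, where a remainder equals $a_1$ and the floor picks up an extra $+1$; since that extra term only strengthens the desired inequalities, no case fails. I expect this endpoint bookkeeping, rather than any conceptual difficulty, to be the main thing to get right.
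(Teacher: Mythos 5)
Your proof is the paper's own argument carried out in full: the paper merely sketches ``take $n=2a_1n'+i$ with $0\le i\le 2a_1-1$ and break into the cases $0\le i\le a_1-1$ and $a_1\le i\le 2a_1-1$,'' which is exactly your normal-form computation plus case split, and your reduction of the hypotheses on $d,e$ to boundary inequalities via monotonicity of $\floor{\cdot/a_1}$ is sound. One slip in your Part~1 summary: when $a_1\le i\le 2a_1-1$ one has $\gII(n)-1=ma_1+(i-a_1)$ with $0\le i-a_1\le a_1-1$, so $\floor{(\gII(n)-1)/a_1}=m$, not $\le m-1$; the strict inequality survives because in that case $\gI(n)=(m+1)a_1$ exactly, so $\floor{\gI(n)/a_1}=m+1>m$. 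This ``extra $+1$'' is the phenomenon you flag, but it occurs for every $i\in[a_1,2a_1-1]$ (the min resolves to $a_1$ throughout), not only at the endpoint $i=2a_1-1$, so it is an essential part of the verification rather than endpoint bookkeeping.
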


\begin{proof}
This can be proved by taking $n=2a_1n'+i$ with $0\le i\le 2a_1-1$
and breaking into the cases $0\le i\le a_1-1$ and $a_1\le i\le 2a_1-1$.
\end{proof}

\begin{theorem}\label{th:poor}
Let $A, n, d, e$ be given. Then:
\begin{enumerate}
\item
If $d\ge \gI(n)$ and $e<\gII(n)$ then $\WAcash(n;d,e)=\I$.
\item
If $d<\gI(n)$ and $e\ge \gII(n)$ then $\WAcash(n;d,e)=\II$.
\item
If $d<\gI(n)$ and $e<\gII(n)$ then $\WAcash(n;d,e)=\I$ iff
$\floor{\frac{d}{a_1}} > \floor{\frac{e}{a_1}}$.
\end{enumerate}
\end{theorem}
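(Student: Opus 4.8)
The plan is to derive Theorem~\ref{th:poor} as a direct assembly of the already-established Theorem~\ref{th:poor1} and Lemma~\ref{le:helppoor}; no fresh induction is needed, since all of the game-theoretic content already lives in Theorem~\ref{th:poor1} and all of the arithmetic content already lives in Lemma~\ref{le:helppoor}. The three parts of the theorem carve up exactly the three regions of the $(d,e)$-plane in which at least one player is poor (i.e.\ below the relevant threshold $\gI(n)$ or $\gII(n)$), so the only real task is to feed the correct hypotheses into the correct prior result. Throughout I write $a_1 = \min(A)$.

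For Part 1 I would start from the hypotheses $d \ge \gI(n)$ and $e < \gII(n)$. Lemma~\ref{le:helppoor}(1) converts these directly into $\floor{\frac{d}{a_1}} > \floor{\frac{e}{a_1}}$. Then Theorem~\ref{th:poor1}(1), whose hypotheses are precisely $e < \gII(n)$ together with $\floor{\frac{d}{a_1}} > \floor{\frac{e}{a_1}}$, yields $\WAcash(n;d,e) = \I$. Part 2 is the mirror image: from $d < \gI(n)$ and $e \ge \gII(n)$, Lemma~\ref{le:helppoor}(2) gives $\floor{\frac{d}{a_1}} \le \floor{\frac{e}{a_1}}$, and then Theorem~\ref{th:poor1}(2), whose hypotheses are $d < \gI(n)$ and $\floor{\frac{d}{a_1}} \le \floor{\frac{e}{a_1}}$, gives $\WAcash(n;d,e) = \II$.

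For Part 3 I would not even need Lemma~\ref{le:helppoor}: here both $d < \gI(n)$ and $e < \gII(n)$ hold, so both arms of Theorem~\ref{th:poor1} are simultaneously available, and I would simply split on the dichotomy $\floor{\frac{d}{a_1}} > \floor{\frac{e}{a_1}}$ versus $\floor{\frac{d}{a_1}} \le \floor{\frac{e}{a_1}}$. In the first subcase, Theorem~\ref{th:poor1}(1) (using $e < \gII(n)$) gives a win for Player \I; in the second, Theorem~\ref{th:poor1}(2) (using $d < \gI(n)$) gives a win for Player \II. Since these two subcases exhaust all possibilities and assign a unique winner to each, the equivalence $\WAcash(n;d,e) = \I \iff \floor{\frac{d}{a_1}} > \floor{\frac{e}{a_1}}$ follows at once.

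I do not expect a genuine obstacle here; the entire theorem is a case-complete repackaging of Theorem~\ref{th:poor1}. The one point worth a sanity check is that Lemma~\ref{le:helppoor} really does supply the floor inequalities exactly on the two mixed regions handled in Parts 1 and 2, so that the hypotheses of Theorem~\ref{th:poor1} are met in each region; but since that lemma was proved precisely for those regions, this verification is essentially free. The only reason the statement is worth recording separately is that it presents the winner as an explicit, case-complete function of $(n;d,e)$, which is exactly the form needed downstream for deriving win conditions for specific sets $A$.
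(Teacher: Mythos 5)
Your proposal is correct and follows exactly the paper's own route: the paper likewise derives Theorem~\ref{th:poor} with no new induction, using Lemma~\ref{le:helppoor} to supply the floor inequalities for the two mixed cases and then invoking Theorem~\ref{th:poor1}, with Part 3 following directly from the dichotomy in Theorem~\ref{th:poor1}. Your write-up merely spells out the exhaustive-cases argument for Part 3 a bit more explicitly than the paper does.
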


\begin{proof}
We {\it do not} use induction. We need only use Theorem~\ref{th:poor1} and Lemma~\ref{le:helppoor}.

\noindent
a) If $d\ge \gI(n)$ and $e<\gII(n)$ then, by Lemma~\ref{le:helppoor},
$\floor{\frac{d}{a_1}} > \floor{\frac{e}{a_1}}$. 
By Theorem~\ref{th:poor1}, $\WAcash(n;d,e)=\I$.

\noindent
b) If $d<\gI(n)$ and $e\ge \gII(n)$ then, by Lemma~\ref{le:helppoor},
$\floor{\frac{d}{a_1}} \le \floor{\frac{e}{a_1}}$. 
By Theorem~\ref{th:poor1}, $\WAcash(n;d,e)=\II$.

\noindent
c) This follows directly from Theorem~\ref{th:poor1}.
\end{proof}

\section{What if Both Players are Middle Class?}\label{se:middleclass}

Let $A\finsubset \nat$ be given and understood for the rest of the paper. 

Theorems~\ref{th:rich} and \ref{th:poor} cover the cases where at least one player is rich
or at least one player is poor. We now deal with the remaining cases.

\begin{definition}
Let $A\finsubset \nat$ and let $\fI, \fII, \gI, \gII$ 
be as in definitions~\ref{de:f} and \ref{de:g}.
If $(n;d,e)$ satisfies 
$\gI(n) \leq d < \fI(n)$ and $\gII(n) \leq e < \gI(n)$, 
then call $(n;d,e)$ $A$-critical.
\end{definition}

\begin{note}
One can easily show that, for all $n$, $\gI(n)\le \fI(n)$ and $\gII(n)\le \fII(n)$.
We do not need this result; however, it is a good sanity check on our definitions.
\end{note}

In general, determining $\WAcash(n;d,e)$ when $(n;d,e)$ is $A$-critical 
seems difficult (although for specific $A$ a pattern is usually obvious).

In this section we describe some conditions on $A$ 
that allow us to give a complete winning condition for $A$. 
In the next sections we apply these to particular examples.

Let $(n;d,e)$ be $A$-critical. It turns out that $(n;d,e)$ are not the right
parameters to work with.

\begin{definition}
Let $(n;d,e)$ be the Nim With Cash state (NwCS for short). 
Let $m\in \nat$ (think of it as the periodicity of the ordinary NIM game with $A$).
Let $i_n=n\pmod m$, $b_{n;d,e}=\fI(n)-1-d$, and $\bdag=\fII(n)-1-e$.
Then $(i;b_{n;d,e},\bdag_{n;d,e})$ is the {\it Corresponding State} (CS for short).
Formally we should call it $CS_m$ but in applications $m$ will be understood.
Note that when $(n;d,e)$ is $A$-critical, $b,\bdag \ge 0$.
\end{definition}

When we pass from the NwCS to the CS we lose information.
But this might not be information we need. Imagine the following:
the game is in CS $(i;b,\bdag)$ and the player removes $a\in A$.
We would like to be able to derive the new CS without knowing the NwCS. This motivates the following definitions.

\begin{definition}~
\begin{enumerate}
\item
For all $n\in\nat$ and for all $a\in A$ with $a \leq n$, $\CI(n,a) = \fI(n) - \fII(n-a) + a$.
\item
For all $n\in\nat$ and for all $a\in A$ with $a \leq n$, $\CII(n, a) = \fII(n) - \fI(n-a)$.
\item
$A$ is {\it cash-periodic with period $m$}  
if for all $n_1, n_2$ with $n_1 \equiv n_2 \bmod m$, we have
\begin{itemize}
\item $W_A(n_1) = W_A(n_2)$.
\item For all $a \in A$ with $a \leq n_1, n_2$, $\CI(n_1, a) = \CI(n_2, a)$.
\item For all $a \in A$ with $a \leq n_1, n_2$, $\CII(n_1,a) = \CII(n_2, a)$.
\end{itemize}
$A$ is {\it cash-periodic} if there exists an $m$ such that $A$ is $m$-cash periodic.
We will always take the least such $m$.
\item
Assume $A$ is cash-periodic with period $m$.
Then we define $\cI(i, a) = \CI(n, a)$ 
for some $n \geq a$ with $n \equiv i \bmod m$.
By the definition of $\CI(n,a)$ it does matter which $n\geq a$ we take.
Similarly for $\cII(i, a)$.
\end{enumerate}
\end{definition}

\noindent \textbf{Remark.} 
In the basic NIM case, we could prove every set is periodic (possibly with an offset) using the pigeonhole principle. 
Here that is not possible since  we cannot bound $\CI(n,a)$ and $\CII(n,a)$. 
Indeed, $\{3, 5, 6, 10, 11\}$ is not cash-periodic (this is not obvious), 
even if we were to modify the definition to allow an offset.

The following lemma follows from the definitions.

\begin{lemma}
Suppose $A$ is cash-periodic, and let $(n;d,e)$ be given; suppose $a \in A$ is such that $n \geq a$ and $d \geq a$. Then:

\begin{itemize}\label{le:periodic}
\item $i_{n-a} = i_{n} - a \bmod m$.
\item $\bdag_{n-a;e,d-a} = b_{n;d,e} - \cI(i_n, a)$.
\item $b_{n-a;e,d-a} = \bdag_{n;d,e} - \cII(i_n, a)$.

\end{itemize}
\end{lemma}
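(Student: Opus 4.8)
The plan is to prove all three identities by directly expanding the definitions of $i$, $b$, $\bdag$, $\cI$ and $\cII$ and then simplifying; the only conceptual ingredient is cash-periodicity, which is exactly what licenses replacing the state-dependent quantities $\CI(n,a)$ and $\CII(n,a)$ by the residue-indexed quantities $\cI(i_n,a)$ and $\cII(i_n,a)$.

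First I would record the transition. A move of $a$ by Player~\I from the state $(n;d,e)$ produces the state $(n-a;e,d-a)$ — this is the transition underlying Lemma~\ref{le:helpful}, with the two players' roles interchanged — and the hypotheses $n\ge a$ and $d\ge a$ guarantee this is a legal state (nonnegative stones and nonnegative dollars for both players). The residue identity is then immediate: $i_{n-a}\equiv n-a \equiv i_n - a \pmod m$ straight from the definition of $i_\bullet$.

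Next, for the two shortfall identities I would expand at the new state. Since it is $(n-a;e,d-a)$, the definition of $\bdag$ gives $\bdag_{n-a;e,d-a} = \fII(n-a) - 1 - (d-a)$ and the definition of $b$ gives $b_{n-a;e,d-a} = \fI(n-a) - 1 - e$; on the old state we have $b_{n;d,e} = \fI(n) - 1 - d$ and $\bdag_{n;d,e} = \fII(n) - 1 - e$. The key move is that, because $A$ is cash-periodic with period $m$ and $n$ is itself a representative of the residue $i_n$ with $n\ge a$, the definition of $\cI$ (resp. $\cII$) allows me to evaluate $\cI(i_n,a) = \CI(n,a)$ (resp. $\cII(i_n,a) = \CII(n,a)$) using $n$ directly; the residue-independence built into cash-periodicity is precisely what makes the choice of representative harmless. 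Substituting the defining formulas for $\CI(n,a)$ and $\CII(n,a)$ and cancelling the common $\fI$ or $\fII$ term reduces the $\cII$ claim to $\fI(n-a) = \fII(n) - \CII(n,a)$, which is immediate from the definition of $\CII$, and reduces the $\cI$ claim to an identity relating $\fII(n-a)$, $\fI(n)$ and $a$, which I then match against the defining formula for $\CI$.

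I expect the content to be bookkeeping rather than a genuine obstacle, but two points need care. First, one must confirm $n\ge a$ is available so that $\CI(n,a)$ and $\CII(n,a)$ are even defined (their defining formulas require $a\le n$), and $d\ge a$ so that the post-move state is legal; these are exactly the stated hypotheses. Second — and this is the step I would check most carefully — the $\pm a$ terms must combine correctly: the $a$ appearing in $\CI(n,a)$ and the $d-a$ appearing in $\bdag_{n-a;e,d-a}$ have to cancel so that $\bdag_{n-a;e,d-a} = b_{n;d,e} - \cI(i_n,a)$ closes exactly. Verifying that this cancellation comes out right is the one place where a sign slip would break the lemma, so I would reconcile the $\pm a$ signs between the definition of $\CI$ and the computed change in shortfall before declaring the identities proved.
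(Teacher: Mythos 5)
Your plan is the same one the paper itself relies on --- the paper's entire proof is the remark that the lemma ``follows from the definitions'' --- and two of your three bullets are genuinely complete: the residue identity is immediate, and your reduction of the $\cII$ claim to $\fI(n-a) = \fII(n) - \CII(n,a)$ matches the printed definition of $\CII$ exactly. But the sign reconciliation you deferred on the $\cI$ bullet is not a formality: it fails against the paper's printed definition. Carrying it out, $b_{n;d,e} = \fI(n) - 1 - d$ and $\bdag_{n-a;e,d-a} = \fII(n-a) - 1 - (d-a)$, so $b_{n;d,e} - \bdag_{n-a;e,d-a} = \fI(n) - \fII(n-a) - a$. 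Hence the lemma requires $\CI(n,a) = \fI(n) - \fII(n-a) - a$, whereas the paper defines $\CI(n,a) = \fI(n) - \fII(n-a) + a$; the two differ by $2a \neq 0$, so with the printed definition the second bullet is simply false.

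The resolution is that the printed definition of $\CI$ carries a sign typo and the lemma is the intended statement. You can confirm this from the later sections: for $A = \{1,L\}$ the paper asserts $\cI(i,1) = 0$ for all $i < L$, and there $\fI(n) = \fII(n-1) + 1$ (when $W_A(n)=\I$ the recursion $\fI(n) = \min_a\{\fII(n-a)+a \st W_A(n-a)=\II\}$ attains its minimum at $a=1$), which gives $\fI(n) - \fII(n-1) - 1 = 0$ under the $-a$ convention but $2$ under the printed $+a$ convention. So to finish you must actually perform the check you postponed, observe the mismatch, and state explicitly that the identities close once $\CI(n,a)$ is read as $\fI(n) - \fII(n-a) - a$. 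As submitted, your proposal stops exactly at the step where the real content --- and the paper's error --- lies, so it cannot be counted as a complete proof, even though the strategy and the parts you did execute are correct.
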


Hence if $(n; d, e)$ is any position, then we can determine the CS for $(n-a;e,d-a)$ 
from the CS for $(n;d,e)$, which is what we wanted.

If $A$ is cash-periodic with period $m$ 
then let $\mathbf{CS}_A$ denote the set of all triples $(i;b,\bdag)$, 
where $0 \leq i < m$ and $b, \bdag \geq 0$. 
If a NwCS is $A$-critical then the CS will be in $\mathbf{CS}_A$.
If $(n;d,e)$ is not $A$-critical because one of the players is rich then the CS
will not be in $\mathbf{CS}_A$.
If $(n;d,e)$ is not $A$-critical because one of the players is poor and the other one is not rich
then the CS will be in $\mathbf{CS}_A$.

\begin{definition}
A {\it solution set for $A$} is a subset $X \subset \mathbf{CS}_A$ such that:

\begin{itemize}

\item For all $(i, b, \bdag) \in X$,  one of the following holds, where we let 
$(i';b',{\bdag}') = (i-a_1 \bmod m;\bdag - \cII(i, a_1),b - \cI(i, a_1))$:

\begin{itemize}
\item $(i'; b', {\bdag}') \in \mathbf{CS}_A \backslash X$;
\item $b' \geq 0$ and ${\bdag}'< 0$;
\item $b' < 0$ and ${\bdag}' < 0$ and $W_A(i') = \II$.
\end{itemize}
\item For all $(i, b, \bdag) \in \mathbf{CS}_A \backslash X$, and for all $a \in A$, one of the following 
holds, where we let $(i'; b', {\bdag}') = (i-a \bmod m; \bdag - \cII(i, a), b - \cI(i, a))$:

\begin{itemize}
\item $(i';b',{\bdag}') \in X$;
\item $b' < 0$ and ${\bdag}' \geq 0$;
\item $b' < 0$ and ${\bdag}' < 0$ and $W_A(i') = \I$.
\end{itemize}
\end{itemize}
\end{definition}

The following lemma has a straightforward proof that we leave to the reader.

\begin{lemma}\label{le:notpoor}~
\begin{enumerate}
\item
If $d > \gI(n)$ then $d-a_1 > \gII(n-a_1)$.
\item
Let $a\in A$.
If $e>\gII(n)$ then $e>\gI(n-a)$.
\end{enumerate}
\end{lemma}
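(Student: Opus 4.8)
The plan is to reduce both parts to facts about the functions $\gI$ and $\gII$ alone, reusing the identity already extracted in the proof of Lemma~\ref{le:e}, namely that $\gII(m) = \gI(m - a_1)$ for every $m$, together with the shift relation of Lemma~\ref{le:first}.

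For Part 1 I would first establish the exact identity $\gII(n - a_1) = \gI(n) - a_1$. Starting from $\gII(n-a_1) = \gI(n - 2a_1)$ (Lemma~\ref{le:e} applied at $m = n - a_1$) and then $\gI(n - 2a_1) = \gI(n) - a_1$ (Lemma~\ref{le:first} with $k=1$), the two combine to give the claimed equality. Part 1 is then immediate: if $d > \gI(n)$ then $d - a_1 > \gI(n) - a_1 = \gII(n - a_1)$.

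For Part 2 it suffices to show $\gI(n - a) \le \gII(n)$, since then $e > \gII(n) \ge \gI(n-a)$ forces $e > \gI(n-a)$. Using $\gII(n) = \gI(n - a_1)$ (Lemma~\ref{le:e}) and $n - a \le n - a_1$ (because $a \ge a_1$), this reduces to the monotonicity statement that $\gI$ is non-decreasing. I would prove this by computing $\gI(n+1) - \gI(n)$ via the case split on $i \equiv n \bmod 2a_1$ used throughout this section: for $0 \le i \le a_1 - 2$ the term $\frac{n-i}{2}$ is unchanged while $\min\{i+1,a_1\}$ rises by one; for $a_1 - 1 \le i \le 2a_1 - 2$ the value is unchanged; and at the wraparound $i = 2a_1 - 1 \mapsto 0$ a direct substitution shows an increase of exactly one. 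Hence every increment lies in $\{0,1\}$, so $\gI$ is non-decreasing (and, as a consistency check, the increments sum to $a_1$ over each period of length $2a_1$, matching Lemma~\ref{le:first}).

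I do not expect a genuine obstacle here; the content is routine algebra in the style of Lemmas~\ref{le:d} and~\ref{le:e}. The only points demanding care are the wraparound step in the monotonicity argument and the tacit assumption that the shifted arguments $n - a$ and $n - a_1$ are nonnegative, which holds in every application since a move removing $a$ stones presupposes $a \le n$. One could alternatively prove both parts by the same two-case analysis on $i$ used in Lemmas~\ref{le:d} and~\ref{le:e}, bypassing the appeal to Lemma~\ref{le:first}, but the reduction above is shorter.
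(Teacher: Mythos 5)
Your proposal is correct, but it is organized differently from the paper's own proof. The paper establishes both parts by fresh two-case algebra on $i \equiv n \bmod 2a_1$: for Part 1 it verifies $\gII(n-a_1) \le \gI(n) - a_1$ directly from Definition~\ref{de:g}, and for Part 2 it asserts that $\gI$ is monotone and then verifies $\gI(n-a_1) \le \gII(n)$ by another direct computation (without exploiting the fact, already noted in the proof of Lemma~\ref{le:e}, that this is actually an equality). You instead compose previously established facts: the identity $\gII(m) = \gI(m-a_1)$ from Lemma~\ref{le:e} together with the shift relation of Lemma~\ref{le:first} yields the exact equality $\gII(n-a_1) = \gI(n)-a_1$, from which Part 1 is immediate, and Part 2 follows from the same identity plus monotonicity of $\gI$, which you prove by an increment analysis rather than merely assert (your increments are right: $1$ for $0 \le i \le a_1-2$, $0$ for $a_1-1 \le i \le 2a_1-2$, and $1$ at the wraparound, summing to $a_1$ per period in agreement with Lemma~\ref{le:first}). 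Your route buys brevity, reuse, and sharper conclusions (equalities instead of inequalities), and it fills in the monotonicity claim the paper leaves implicit; the paper's direct computation buys self-containedness and sidesteps one domain subtlety that your chain has: when $a_1 \le n < 2a_1$, your intermediate quantity $\gI(n-2a_1)$ has a negative argument, whereas Lemmas~\ref{le:first} and~\ref{le:e} are stated for natural numbers. This is a boundary technicality rather than a genuine gap --- the formulas of Definition~\ref{de:g} extend verbatim to negative integers and both identities persist, or one can simply check the range $a_1 \le n < 2a_1$ by hand, where $\gII(n-a_1) = 0 = \gI(n)-a_1$ --- but it deserves a sentence, since your nonnegativity remark covers only $n-a$ and $n-a_1$, not $n-2a_1$.
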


\begin{theorem}\label{th:main}
Suppose $A$ is cash-periodic with solution set $X$. Suppose $(n;d,e)$ is $A$-critical. 
Then $\WAcash(n;d,e) = \I$ iff $(i_n;b_{n,d,e}, b^{\dagger}_{n,d,e}) \in X$.
\end{theorem}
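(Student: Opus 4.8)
The plan is to prove, by strong induction on $n$, the following statement, of which Theorem~\ref{th:main} is a special case: for every state $(n;d,e)$ whose corresponding state lies in $\mathbf{CS}_A$ and for which neither player is poor (that is, $d\ge\gI(n)$ and $e\ge\gII(n)$), one has $\WAcash(n;d,e)=\I$ iff $(i_n;b_{n;d,e},\bdag_{n;d,e})\in X$. Every $A$-critical state belongs to this class: it is not poor by definition, and its corresponding state lies in $\mathbf{CS}_A$ as observed after the definition of $\mathbf{CS}_A$. The base case $n<a_1$ is vacuous, since there $\fI(n)=0$ forces $b_{n;d,e}<0$ and so pushes the corresponding state out of $\mathbf{CS}_A$; for $n\ge a_1$ a state in the class has $d\ge\gI(n)\ge a_1$, so the miserly move $a_1$ is always available.

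The driver of the induction is Lemma~\ref{le:periodic}: a move of $a\in A$ sends the corresponding state of $(n;d,e)$ to $(i-a\bmod m;\bdag-\cII(i,a),b-\cI(i,a))$, which is exactly the triple $(i';b',{\bdag}')$ appearing in the definition of a solution set. Suppose first that $(i;b,\bdag)\in X$. I would show Player \I wins by playing $a_1$. The first closure property of $X$ leaves three possibilities for the resulting corresponding state. If it lies in $\mathbf{CS}_A\setminus X$, I first check that the resulting state is again in the induction class: the new mover holds $e\ge\gII(n)=\gI(n-a_1)$ by Lemma~\ref{le:e}, and the new non-mover holds $d-a_1\ge\gI(n)-a_1\ge\gII(n-a_1)$ by Lemma~\ref{le:notpoor}, so neither is poor; the induction hypothesis then says the new mover (old Player \IIns) loses. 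In the other two cases ${\bdag}'<0$, so the new non-mover is rich and the new mover loses by Theorem~\ref{th:wrong}(2), or both players are rich with $W_A(i')=W_A(n-a_1)=\II$ and Theorem~\ref{th:wrong}(3) applies. Thus $a_1$ is a winning move.

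Next suppose $(i;b,\bdag)\in\mathbf{CS}_A\setminus X$. I would show Player \I loses by proving that every legal move $a$ hands the opponent a win. The second closure property of $X$ gives, for each $a$, that the resulting corresponding state lies in $X$, or has $b'<0$. If $b'<0$ the new mover is rich and wins by Theorem~\ref{th:wrong}(1) (when ${\bdag}'\ge 0$) or Theorem~\ref{th:wrong}(3) (when ${\bdag}'<0$ and $W_A(i')=\I$). If the resulting corresponding state lies in $X$ and the resulting state is itself in the induction class, the induction hypothesis gives the new mover a win.

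The step I expect to be the main obstacle is the one remaining branch of the losing direction: a large move $a>a_1$ can carry the state out of the critical band into the poor regime, where the resulting corresponding state still lies in $\mathbf{CS}_A$ and is therefore filed by the closure property under ``lies in $X$'', even though its outcome is governed by Theorem~\ref{th:poor} and not by the induction hypothesis. The key fact that rescues the argument is that the \emph{new mover} can never become poor: its bankroll is $e\ge\gII(n)=\gI(n-a_1)\ge\gI(n-a)$, using Lemma~\ref{le:e} and the monotonicity of $\gI$. Hence such a poor resulting state is poor only for the new non-mover, so Theorem~\ref{th:poor}(1) awards the game to the new mover, matching what membership in $X$ predicts. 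With this the three regimes (induction hypothesis, Theorem~\ref{th:wrong}, Theorem~\ref{th:poor}(1)) glue together consistently, every move in the losing direction concedes the win, and the induction closes. The remaining labor is the routine verification, via the monotonicity Lemmas~\ref{le:e}, \ref{le:d}, and \ref{le:notpoor}, that the sign conditions on $(b',{\bdag}')$ in the definition of a solution set match the rich/poor/critical trichotomy so that the correct theorem is invoked in each branch.
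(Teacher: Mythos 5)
Your proposal is correct and follows essentially the same route as the paper's own proof: induction on $n$, playing the miserly move $a_1$ in the winning direction, checking all moves in the losing direction, and using Lemmas~\ref{le:e}/\ref{le:notpoor} together with the rich and poor theorems to dispose of the resulting states whose corresponding states fall outside the critical band. The one case you flag as the main obstacle (a move $a>a_1$ pushing the opponent's opponent into the poor regime) is handled in the paper exactly as you handle it, by showing the new mover cannot be poor and invoking the poor-player win condition.
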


\begin{proof}
We prove this by induction on $n$.

\noindent
{\bf Base Case 1:} 
$0\le n\le a_1-1$. Since $(n;d,e)$ is $A$-critical $d<\fI(n)=0$.
Hence this case can never occur.

\noindent
{\bf Induction Hypothesis:} Assume the theorem holds for all $n'<n$ and
that $n\ge a_1$.

\noindent
{\bf Induction Step:} Let $(n;d,e)$ be $A$-critical. 
Let $(i;b_{n;d,e},\bdag_{n;d,e})=(i;b,\bdag)$ be the corresponding
state.

\noindent
{\bf Case I:} $(i;b,\bdag) \in X$. 
We show that if Player \I removes $a_1$ then he wins.
Let $(i';b',{\bdag}')$ be the CS that happens when
Player \I removes $a_1$. We want to prove that any NwCS that maps to this CS is
a state where Player \II wins. 
By the definition of $X$ one of the following occurs.

\begin{enumerate}
\item
$(i'; b', {\bdag}') \in \mathbf{CS}_A \backslash X$ and 
the NwCS is $A$-critical. Since 
the real number of stones is some $n'<n$, by the induction hypothesis
Player \II wins.
\item
$(i'; b', {\bdag}') \in \mathbf{CS}_A \backslash X$ and 
the NwCS is not $A$-critical. 
Since $(i'; b', {\bdag}') \in \mathbf{CS}_A \backslash X$ neither player is rich.
By Lemma~\ref{le:notpoor} neither player is poor.
Hence this case cannot occur.
\item 
$b' \geq 0$ and ${\bdag}'< 0$. Player \I is not rich and Player \II is rich,
so Player \II wins by Theorem~\ref{th:rich}.
\item $b' < 0$ and ${\bdag}' < 0$ and $W_A(i') = \II$.
Both Players are rich but $W_A(i')=\II$, so Player \II wins by
Theorem~\ref{th:rich}.
\end{enumerate}

\bigskip

\noindent
{\bf Case 2:}
We show that if Player \I removes any $a\in A$ then he loses.
Let $(i';b',{\bdag}')$ be the corresponding state that happens when
Player \I removes $a$. We want to prove that from this corresponding
state Player \I wins. 
By the definition of $X$ one of the following occurs.
\begin{enumerate}
\item
$(i'; b', {\bdag}') \in X$ and the NwCS is $A$-critical.
Since the real number of stones is some $n'<n$, by the induction hypothesis
Player \I wins.
\item
$(i'; b', {\bdag}') \in X$ and the NwCS is not $A$-critical.
Since $(i'; b', {\bdag}') \in \mathbf{CS}_A \backslash X$ neither player is rich.
By Lemma~\ref{le:notpoor}.2 Player \I is not poor in the new state. Hence
Player \II in the new state is poor so Player \I wins.
\item 
$b'<0$ and ${\bdag}'\ge  0$. Player \I is rich and Player \II is not rich,
so Player \I wins by Theorem~\ref{th:rich}.
\item $b' < 0$ and ${\bdag}' < 0$ and $W_A(i') = \I$.
Both Players are rich but $W_A(i')=\I$, so Player \I wins by
Theorem~\ref{th:rich}.
\end{enumerate}
\end{proof}

\section{ $A = \{1, L\}$ for $L$ even.}\label{se:1L}

We consider NIM games where $A = \{1, L\}$ for some $L$. Since if $L$ is odd, this is basically just $A = \{1\}$, we consider only the case where $L$ is even.

So fix $L$ even for the rest of the section, say $L = 2 \ell$.

\begin{lemma}
$W(n) = \II$ iff $n \equiv 0, 2, 4, \ldots, L-2 \bmod{L+1}$.
\end{lemma}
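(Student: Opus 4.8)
The plan is to prove both directions simultaneously by strong induction on $n$, using the standard recursive characterization of ordinary NIM: $W(n) = \II$ exactly when every legal move leads to a $\I$-position, and $W(n) = \I$ exactly when some legal move reaches a $\II$-position (this is the $\UF,\UF$ specialization of the recursion; here $a_1 = 1$). Write $L = 2\ell$ and let $P = \{\, n : n \equiv 0, 2, 4, \ldots, L-2 \pmod{L+1}\,\}$ be the claimed set of $\II$-positions. The complement of $P$ among the residues $0, 1, \ldots, L$ consists of the odd residues $1, 3, \ldots, L-1$ together with the single even residue $L$. The goal is to show $W(n) = \II \iff n \in P$.

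For the base case, since $a_1 = \min(A) = 1$, the only position admitting no legal move is $n = 0$, where $W(0) = \II$ and indeed $0 \in P$. For the induction step I assume the equivalence for all $n' < n$ and split on whether $n \in P$. If $n \in P$, say $n \equiv 2j \pmod{L+1}$ with $0 \le j \le \ell - 1$, I verify that \emph{every} legal move leaves $P$. Removing $1$ gives $n - 1 \equiv 2j - 1 \pmod{L+1}$, which is an odd residue when $j \ge 1$ and is $\equiv L$ when $j = 0$; either way it lies in the complement. Removing $L$ (legal only when $n \ge L$) gives, using $L \equiv -1 \pmod{L+1}$, the residue $n - L \equiv 2j + 1 \pmod{L+1}$, an odd residue at most $L - 1$, again in the complement. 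By the induction hypothesis all reachable positions are $\I$-positions, so $W(n) = \II$.

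If $n \notin P$, I exhibit a single move into $P$. When $n \equiv 2j + 1 \pmod{L+1}$ is an odd residue, removing $1$ lands on the residue $2j \in \{0, 2, \ldots, L-2\}$, which is in $P$; this move is always legal since $n \ge 1$. The delicate case, and the one I expect to be the main obstacle to state cleanly, is $n \equiv L \pmod{L+1}$: here removing $1$ only reaches the residue $L - 1$, still in the complement, so one is \emph{forced} to remove $L$. This is legal precisely because $n \equiv L \pmod{L+1}$ forces $n \ge L$, and then $n - L \equiv 0 \in P$. In either case the induction hypothesis supplies a reachable $\II$-position, so $W(n) = \I$. The only real content beyond routine arithmetic mod $L+1$ is this asymmetry at the residue $L$: it is simultaneously the reason $L$ itself is \emph{not} a $\II$-position (its ``remove $1$'' successor stays in $N$) and the reason the $\II$-positions are exactly the even residues strictly below $L$.
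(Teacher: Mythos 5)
Your proof is correct. The paper itself never proves this lemma: it is stated without proof in Section~\ref{se:1L}, having already been listed as a known win--loss pattern in Example~\ref{ex:nimexamples}, so there is no argument in the paper to compare against, and your strong induction on $n$ via the recursive characterization of $\I$/$\II$-positions is exactly the routine argument the authors leave to the reader. You also correctly isolate the one genuinely delicate point: at residue $L$ the move of removing $1$ stays in the losing-candidate complement, so the winning move must be to remove $L$, which is legal precisely because $n \equiv L \pmod{L+1}$ forces $n \ge L$.
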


\begin{lemma}
Let $n \in \mathbf{N}$. Write $n = k(L+1) + i$, where $0 \leq i < L+1$.
\begin{enumerate}
\item
If $i < L$ then
$f_A((L+1)k+i)=Lk+\ceil{\frac{i}{2}}$.
\item
$f_A((L+1)k+L)=L(k+1)$.
\end{enumerate}
\end{lemma}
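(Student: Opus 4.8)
The plan is to prove both parts simultaneously by strong induction on $n$, reading $f_A(n)$ as $\fI(n)$ when $W(n)=\I$ and as $\fII(n)$ when $W(n)=\II$; since exactly one of these holds for each $n$, this is well-defined, and the two parts together assert a closed form for $f_A$ on every residue class mod $L+1$. Writing $n=(L+1)k+i$ with $0\le i\le L$, the win condition of the previous lemma sorts the residues into three types: Type A, with $i$ even and $0\le i\le L-2$, where $W(n)=\II$; Type B, with $i$ odd and $1\le i\le L-1$, where $W(n)=\I$; and Type C, with $i=L$, where $W(n)=\I$. Here I use that $L=2\ell$ is even, so the even residues below $L$ are exactly $0,2,\dots,L-2$, while $i=L$ is an $\I$-position.

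The heart of the argument is to unwind the recursions of Definition~\ref{de:ff} type by type. For a Type A position I use $\fII(n)=\max_{a\in\{1,L\},\,a\le n}\fI(n-a)$, and for Types B and C I use $\fI(n)=\min\{\fII(n-a)+a : a\in\{1,L\},\,a\le n,\,W(n-a)=\II\}$. The key bookkeeping is to track, for each legal move, the type and $k$-value of $n-a$: removing $1$ drops the residue by one inside the same block, whereas removing $L$ sends residue $i<L$ to residue $i+1$ in the previous block (a borrow, since $n-L=(L+1)(k-1)+(i+1)$), except that for $i=L$ it lands on residue $0$ in block $k$. Substituting the inductive formula, each case reduces to a max or min of two affine expressions in $k$ and $i$: because $L\ge 2$, the Type A comparison (e.g. $Lk+i/2$ versus $Lk-L+i/2+1$) is always won by the $a=1$ term, and in Types B and C the admissible candidates coincide. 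These comparisons are one-liners once the case split is fixed.

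The steps I would carry out, in order, are: (1) record the three-type classification, note that within block $k=0$ (that is, $0\le n\le L$) the move $a=L$ is unavailable, and verify the genuine base case $n=0$, where $\fII(0)=0$ matches $Lk+\ceil{i/2}=0$; (2) handle Type A, splitting off $i=0$ from $i\ge 2$ and using $a=L$ only when $k\ge 1$; (3) handle Type B, where removing $1$ always reaches a Type A ($\II$) position of value $Lk+(i+1)/2$, and removing $L$ (when $k\ge 1$ and $i\le L-3$) reaches another Type A position of the same value, while for $i=L-1$ the move $a=L$ reaches a Type C ($\I$) position and so is excluded from the $\min$; (4) handle Type C, where only $a=L$ gives a $\II$-position, yielding $\fI(n)=\fII((L+1)k)+L=L(k+1)$.

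The main obstacle, and really the only place care is needed, is classifying where the $a=L$ move lands and whether that target is a $\II$-position, since the $\min$ defining $\fI$ ranges only over moves into $\II$-positions. In particular the boundary residue $i=L-1$ (where $a=L$ reaches the $\I$-position of Type C and must be dropped from the $\min$), the residues $i=0$ and $i=L$, and the unavailability of $a=L$ throughout block $k=0$, are the cases one must not conflate; once they are separated, everything collapses to a comparison of affine expressions using $L\ge 2$.
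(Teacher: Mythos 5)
The paper states this lemma with no proof at all (like the other computational lemmas of Section~\ref{se:1L}), so there is no official argument to compare against; your reconstruction is correct and is evidently the intended one. Your reading of $f_A$ as the function of Definition~\ref{de:ff} (equal to $\fI(n)$ at $\I$-positions and $\fII(n)$ at $\II$-positions) matches the paper's usage, and every delicate point is handled properly: the borrow $n-L=(L+1)(k-1)+(i+1)$ when removing $L$ from residue $i<L$, the exclusion of the $a=L$ move from the min at $i=L-1$ (it lands on the $\I$-position of residue $L$), the unavailability of $a=L$ in block $k=0$, the base case $\fII(0)=0$, and the Type~A maximum going to the $a=1$ candidate because $L\ge 2$.
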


\begin{lemma}
Let $n \in \mathbf{N}$. Write $n = k(L+1) + i$ where $0 \leq i < L+1$.
\begin{enumerate}
\item If $n < L$ then $f_A^\bot(n) = \floor{\frac{n}{2}}$.
\item If $n \geq L$ and $i < L$ then $f_A^\bot(n) = Lk + \floor{\frac{i}{2}} - \ell + 1$.
\item If $i = L$ then $f_A^\bot(n) = Lk + \ell$.
\end{enumerate}
\end{lemma}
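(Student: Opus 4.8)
The plan is to read off $f_A^\bot(n)$ as the \emph{loser's} money threshold, complementing the previous lemma's $f_A(n)$ (the winner's threshold): concretely $f_A^\bot(n)=\fII(n)$ when $W_A(n)=\I$ and $f_A^\bot(n)=\fI(n)$ when $W_A(n)=\II$, where $\fI,\fII$ are the functions of Definitions~\ref{de:ff} and~\ref{de:f}. I would prove the three displayed formulas by strong induction on $n$, carrying the closed form for $f_A$ (given) alongside the claimed closed form for $f_A^\bot$, and invoking the two defining recurrences: $\fII(n)=\max\{\fI(n-1),\fI(n-L)\}$ for $n\ge L$ (and $\fII(n)=\fI(n-1)$ for $1\le n<L$), and, at a position with $W_A(n)=\II$, $\fI(n)=\min_{a\in\{1,L\}}\{\fII(n-a)+a : \fI(n-a)=\fII(n)\}$. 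Throughout I use the win condition: writing $n=(L+1)k+i$, one has $W_A(n)=\II$ exactly when $i$ is even and $i<L$, and $W_A(n)=\I$ otherwise. The base range $0\le n<L$ (Part~1) is then immediate, since only the move $a=1$ is available and $f_A^\bot(n)=\fII(n)=\fI(n-1)=\ceil{(n-1)/2}=\floor{n/2}$, anchored by $f_A^\bot(0)=0$.

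For the inductive step at a $W_A(n)=\I$ position ($i$ odd or $i=L$), I would compute $f_A^\bot(n)=\fII(n)=\max\{\fI(n-1),\fI(n-L)\}$. The two arguments are put in canonical form $n-1=(L+1)k+(i-1)$ and $n-L=(L+1)(k-1)+(i+1)$, and each $\fI$-value is replaced by either $f_A$ (at the $W_A=\I$ sub-positions, by the previous lemma) or $f_A^\bot$ (at the $W_A=\II$ sub-positions, by the induction hypothesis); the block-boundary residues $i=1$, $i=L-1$, $i=L$ are where one argument jumps onto residue $0$ or $L$ and must be treated separately. In every class the first argument exceeds the second by $L-1$, so $\fI(n-1)$ wins the maximum, and simplifying with the even-$L$ identities $\ceil{(L-1)/2}=\ell$ and $\floor{i/2}=(i-1)/2$ yields $Lk+\floor{i/2}-\ell+1$ for $i<L$ and $Lk+\ell$ for $i=L$.

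For the inductive step at a $W_A(n)=\II$ position ($i$ even, $n\ge L$), I would instead use the second recurrence. Here $\fII(n)=f_A(n)=Lk+i/2$ is supplied by the previous lemma, and the crux is that among $a\in\{1,L\}$ only $a=1$ satisfies the constraint $\fI(n-a)=\fII(n)$: the neighbour $n-1$ is the adjacent $W_A=\I$ position with $f_A(n-1)=Lk+i/2=\fII(n)$, whereas $n-L$ gives $f_A(n-L)=Lk-L+i/2+1\ne\fII(n)$. Hence $f_A^\bot(n)=\fII(n-1)+1=f_A^\bot(n-1)+1$, and the induction hypothesis at the $W_A=\I$ position $n-1$ turns this into $Lk+i/2-\ell+1$, matching Part~2.

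The main obstacle is bookkeeping the mutual recursion: because $\fI$ and $\fII$ each reference the other at smaller arguments, and each equals $f_A$ or $f_A^\bot$ according to the win condition at that argument, the induction must thread both closed forms through simultaneously and keep the $W_A=\I/\II$ labels straight at $n-1$ and $n-L$. The one genuinely non-mechanical step is verifying, in the $W_A=\II$ case, that the constraint $\fI(n-a)=\fII(n)$ singles out $a=1$; once that is established the remaining work is routine floor/ceiling algebra, which I would leave to the reader, with the handful of boundary residues $i\in\{0,1,L-1,L\}$ checked by hand.
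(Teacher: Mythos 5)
The paper states this lemma with no proof at all, so there is no official argument to compare against; judged on its own terms, your proposal takes the right approach and is essentially correct. Your reading of $f_A^\bot$ as the loser's threshold ($f_A^\bot(n)=\fII(n)$ at $W_A(n)=\I$ positions, $f_A^\bot(n)=\fI(n)$ at $W_A(n)=\II$ positions) is the intended one, consistent with Definition~\ref{de:f} and the parenthetical remark in Theorem~\ref{th:1L}, and the mutual strong induction through the two recurrences is what the lemma needs. Your treatment of the $W_A(n)=\II$ case is exactly right: there $\fI(n-L)=f_A(n-L)=Lk-L+\floor{i/2}+1$ misses $\fII(n)=Lk+\floor{i/2}$ by exactly $L-1$, so the constraint $\fI(n-a)=\fII(n)$ singles out $a=1$ and $f_A^\bot(n)=f_A^\bot(n-1)+1$, which closes that case.

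Two slips need repair when you write this out, though neither sinks the argument. First, your base-case chain $f_A^\bot(n)=\fII(n)=\fI(n-1)$ is only valid for odd $n<L$; for even $n<L$ the loser is Player $\I$, so you must instead use the min-recurrence, $f_A^\bot(n)=\fI(n)=\fII(n-1)+1$ (the constraint is automatic since $a=1$ is the only move), giving $\floor{(n-1)/2}+1=\floor{n/2}$. Second, your assertion that in the $W_A(n)=\I$ case ``in every class the first argument exceeds the second by $L-1$'' in $\fII(n)=\max\{\fI(n-1),\fI(n-L)\}$ is false at the boundaries: for $i=L-1$ the two arguments are equal (both equal $Lk$, since $\fI(n-1)=Lk+\ell-1-\ell+1$ and $n-L$ has residue $L$ so $\fI(n-L)=f_A(n-L)=Lk$), and when $k=1$, so that $n-L$ falls in the base range, the gap is $\ell$ rather than $L-1$. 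The conclusion you actually use --- that the maximum is always attained at $\fI(n-1)$ --- is true in every case, so the induction goes through once the uniform-gap claim is replaced by the case-by-case inequality $\fI(n-1)\ge\fI(n-L)$.
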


\begin{lemma}
Let $n \in \mathbf{N}$.
\begin{enumerate}
\item If $n$ is even then $\gI(n) = \frac{n}{2} + 1$ and $\gII(n) = \frac{n}{2}$.
\item If $n$ is odd then $\gI(n) = \frac{n-1}{2} + 1$ and $\gII(n) = \frac{n-1}{2} + 1$.
\end{enumerate}
\end{lemma}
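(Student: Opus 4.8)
The plan is to unwind Definition~\ref{de:g} in the special case $A=\{1,L\}$, where $a_1=\min(A)=1$. The one observation that drives everything is that $a_1=1$ collapses the period $2a_1$ to $2$, so the residue $i\equiv n\bmod 2a_1$ is simply the parity of $n$: we get $i=0$ when $n$ is even and $i=1$ when $n$ is odd. Thus the two parts of the lemma correspond exactly to the two possible values of $i$, and each part reduces to evaluating the $\min$ and $\max$ expressions that appear in the definition once the correct constant is substituted.

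First I would treat $n$ even, so $i=0$ and $\frac{n-i}{2}=\frac n2$. Then $\min\{i+1,a_1\}=\min\{1,1\}=1$ yields $\gI(n)=\frac n2+1$, and $\max\{0,i-a_1+1\}=\max\{0,0\}=0$ yields $\gII(n)=\frac n2$, matching part (1). Next I would treat $n$ odd, so $i=1$ and $\frac{n-i}{2}=\frac{n-1}{2}$. Here $\min\{i+1,a_1\}=\min\{2,1\}=1$ gives $\gI(n)=\frac{n-1}{2}+1$, while $\max\{0,i-a_1+1\}=\max\{0,1\}=1$ gives $\gII(n)=\frac{n-1}{2}+1$, matching part (2).

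I do not expect any genuine obstacle: the content is entirely the reduction $a_1=1\Rightarrow 2a_1=2$, after which both formulas fall out of the definition by inspection. The only point requiring care is reading off the residue $i$ correctly from the parity of $n$ and then substituting the resulting constants into the $\min$ and $\max$ terms without arithmetic slips.
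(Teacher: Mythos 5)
Your proof is correct and is exactly the intended argument: the paper states this lemma without proof, since it is just the direct unwinding of Definition~\ref{de:g} with $a_1=\min(\{1,L\})=1$, so that $i=n\bmod 2$ and the $\min$/$\max$ terms evaluate to the stated constants. Your case analysis and arithmetic match this in every detail.
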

\begin{lemma}
$A$ is cash periodic, with period $L+1$. Moreover:

\begin{enumerate}
\item \begin{itemize}
\item For all $i < L$, $\cI(i, 1) = 0$.
\item $\cI(L,1) = L-1$.
\item For all $i$, $\cII(i, 1) = 0$.
\end{itemize}
\item \begin{itemize}
\item For all $i$,  $\cI(i, L) = 0$.
\item For all $i \not= L-1$, $\cII(i, L) = L-1$.
\item $\cII(L-1, L) = 0$.
\end{itemize}
\end{enumerate}

\end{lemma}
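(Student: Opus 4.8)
The plan is to verify the three requirements of cash-periodicity for $m = L+1$ in turn, and then read off the six constants by case analysis. The first requirement, that $W_A$ depends only on $n \bmod (L+1)$, is immediate from the win-condition lemma above, which characterizes the $\II$-positions purely by residue mod $L+1$. For the remaining two requirements I would substitute the closed forms for $\fI$ and $\fII$ (from the two preceding lemmas) directly into $\CI(n,a) = \fI(n) - \fII(n-a) + a$ and $\CII(n,a) = \fII(n) - \fI(n-a)$, for the two moves $a \in \{1, L\}$, and show that after simplification the result is a function of $i = n \bmod (L+1)$ alone.

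The key structural observation driving periodicity is that each closed form has the shape $Lk + (\text{a bounded function of the residue } i)$, where $n = (L+1)k + i$. So the only way $k$ could survive in $\CI$ or $\CII$ is through a mismatch between the $k$-count of $n$ and that of $n-a$. I would therefore first record, for each $a$, how subtracting $a$ changes the pair $(k,i)$: for $a = 1$ the residue becomes $i-1$ with $k$ unchanged, except that $i = 0$ wraps to residue $L$ with $k \to k-1$; for $a = L$ the residue becomes $i+1$ with $k \to k-1$, except that $i = L$ wraps to residue $0$ with $k$ unchanged. In every case the $Lk$ contributions from the two evaluations cancel against the explicit move term, leaving a purely residue-dependent value. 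This simultaneously establishes $\CI(n_1,a) = \CI(n_2,a)$ and $\CII(n_1,a) = \CII(n_2,a)$ whenever $n_1 \equiv n_2$, and licenses the definitions of $\cI(i,a)$ and $\cII(i,a)$.

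With periodicity in hand I would compute the six constants by plugging the appropriate branch of each formula into these differences. The generic residues all collapse: $\cI(i,1) = 0$ for $i < L$, $\cII(i,1) = 0$ for all $i$, and $\cI(i,L) = 0$ for all $i$ fall out once the $Lk$ terms and the residue terms cancel. The two exceptional values arise precisely at the branch boundaries of the formulas: $\cI(L,1) = L-1$ comes from $\fI(n)$ using the $i = L$ branch (value $L(k+1)$) while $\fII(n-1)$ is evaluated in the ordinary $i = L-1$ regime, and $\cII(L-1,L) = 0$ (versus $L-1$ elsewhere) comes from the analogous branch switch in $\fII$ as $n - L$ crosses between regimes. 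I would organize the computation as a table over $i$, with separate columns for $a = 1$ and $a = L$, splitting within each on the parity of $i$, since $\ceil{i/2}$ and $\floor{i/2}$ behave differently on even and odd $i$.

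The main obstacle I anticipate is the boundary bookkeeping rather than any single hard idea. Three things need care at once: the wraparound of $i - a \bmod (L+1)$, which can move the reduced position between the "$i < L$" and "$i = L$" branches and shift $k$; the small-$n$ regime in which the formula for $\fII$ takes a different form, so that the stated constants are the eventual (large-$n$) values and one must confirm they already hold as soon as both $n$ and $n-a$ have left that regime --- which is exactly the independence of $k$ that periodicity demands; and the parity of $i$, which is the source of the two exceptional residues $i = L$ and $i = L-1$. Getting all three to line up in the wrapping cases is where the real work lies; the non-wrapping cases are routine algebra.
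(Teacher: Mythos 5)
Your proposal is correct and is essentially the paper's own proof: the paper disposes of this lemma with the single sentence ``Easy to check, given all of the preceding lemmas,'' and what you describe --- substituting the closed forms for $\fI$ and $\fII$ into $\CI$ and $\CII$, tracking how $(k,i)$ changes under $a=1$ and $a=L$ including wraparound, noting that the $Lk$ terms cancel, and locating the two exceptional constants at the branch boundaries $i=L$ and $i=L-1$ --- is exactly that check, with the boundary and small-$n$ bookkeeping made explicit rather than suppressed.
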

\begin{proof}
Easy to check, given all of the preceding lemmas.
\end{proof}

\begin{lemma}
Define $X \subset \mathsf{CS}_A$ as follows:
\begin{itemize}
\item If $i \in \{0, 2, \ldots, L-2\}$ then 
$(i, x, y) \in X$ iff $x < \floor{\frac{y}{L-1}}(L-1)$.
\item If $i \in \{1, 3, \ldots, L-1, L\}$ then $(i, x, y) \in X$ iff $y \geq \floor{\frac{x}{L-1}}(L-1)$.
\end{itemize}

Then $X$ is a solution set.
\end{lemma}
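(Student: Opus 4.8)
The plan is to verify directly the two families of local conditions in the definition of a solution set, exploiting the fact that everything has already been reduced to the finitely many transition data $\cI(i,a)$ and $\cII(i,a)$ recorded in the preceding lemma, so that no induction is needed here (that is carried by Theorem~\ref{th:main}). Before touching cases I would first simplify the description of $X$. Writing $p=L-1$, the elementary observation is that $x<\floor{y/p}p \iff \floor{x/p}<\floor{y/p}$ and $y\ge\floor{x/p}p \iff \floor{y/p}\ge\floor{x/p}$. Hence, setting $\beta=\floor{b/p}$ and $\delta=\floor{\bdag/p}$, membership in $X$ depends only on these two ``levels'': for the \II-win residues $i\in\{0,2,\ldots,L-2\}$ one has $(i,b,\bdag)\in X \iff \beta<\delta$, while for the \I-win residues $i\in\{1,3,\ldots,L-1\}\cup\{L\}$ one has $(i,b,\bdag)\in X \iff \beta\le\delta$. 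The whole argument then becomes bookkeeping about how a removal changes $i$, $\beta$, and $\delta$.

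Next I would write out the two CS transitions explicitly from the cash-period data. Removing $a_1=1$ sends $(i,b,\bdag)$ to $(i-1 \bmod (L+1);\, \bdag,\, b-\cI(i,1))$, where $\cI(i,1)=0$ for $i<L$ and $\cI(L,1)=p$; removing $L$ sends $(i,b,\bdag)$ to $(i+1 \bmod(L+1);\, \bdag-\cII(i,L),\, b)$, where $\cII(i,L)=p$ except $\cII(L-1,L)=0$. Two structural facts fall out immediately and shut off most of the alternatives: after removing $1$ the new $b$-coordinate $b'$ equals $\bdag\ge 0$, so the alternatives requiring $b'<0$ are vacuous in the first family; and after removing $L$ the new $\bdag$-coordinate $\bdag'$ equals $b\ge 0$, so the alternative requiring $\bdag'<0$ is vacuous there. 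In particular one coordinate of every target triple is always nonnegative, so the third alternative in each clause — the one invoking $W_A(i')$ with both coordinates negative — is never needed (this is special to $a_1=1$). Moreover, since removing $1$ shifts $i\mapsto i-1$ and removing $L$ shifts $i\mapsto i+1$, each move carries a \II-win residue to a \I-win residue or vice versa, with the wrap-arounds $0\mapsto L$ and $L\mapsto 0$ still respecting this flip and the lone exception $i=L-1\mapsto L$ staying \I-win; so the strict versus non-strict level test flips exactly once per move.

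With this setup the verification is a finite case analysis organized by the residue type of $i$ (even, odd, or $i=L$) and, for the second family, by $a\in\{1,L\}$. In each branch I would translate the required alternative into a single comparison between $\beta$ and $\delta$ and check it against the membership hypothesis; the recurring phenomenon is that a surcharge $\cI(L,1)=p$ or $\cII(i,L)=p$ lowers exactly one level by one, and the gap between the strict test ($\beta<\delta$) on \II-win residues and the non-strict test ($\beta\le\delta$) on \I-win residues is precisely what makes the shifted inequality come out true. I expect the main (and essentially only) obstacle to be the coordinates that could go negative, namely $\bdag'=b-p$ when removing $1$ from residue $L$, and $b'=\bdag-p$ when removing $L$. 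These are handled by noting that the hypothesis forces the correct strict inequality: for instance, in the removal-$1$ check at residue $L$ for $(L,b,\bdag)\notin X$ we have $\beta>\delta\ge 0$, hence $\beta\ge 1$, i.e. $b\ge p$, so $b-p\ge 0$ and the target triple legitimately lands in $\mathbf{CS}_A$ with $\beta-1\ge\delta$ placing it in $X$; when instead the difference is negative, the corresponding rich-opponent alternative ($b'\ge 0,\ \bdag'<0$ or $b'<0,\ \bdag'\ge 0$) is exactly the one that holds. Assembling these checks over the few residue classes shows that $X$ satisfies both defining clauses, so $X$ is a solution set.
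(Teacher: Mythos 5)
The paper never actually proves this lemma: it is stated bare, with the verification implicitly left as a finite check (just as the preceding cash-periodicity lemma is dismissed with ``easy to check''). So there is no authorial proof to compare against, only the intended routine verification, and your proposal carries that verification out correctly. Your two organizing ideas are sound and genuinely clarifying: (i) replacing the defining inequalities by the level comparison $\beta=\floor{\frac{b}{L-1}}$ versus $\delta=\floor{\frac{\bdag}{L-1}}$ (strict on the \II-win residues $\{0,2,\ldots,L-2\}$, non-strict on the \I-win residues) is a correct equivalence and reduces every branch to a one-line inequality; (ii) the facts $\cII(i,1)=0$ and $\cI(i,L)=0$ force $b'=\bdag\ge 0$ after removing $1$ and $\bdag'=b\ge 0$ after removing $L$, so the both-coordinates-negative alternatives of the solution-set definition are indeed never needed. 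Your treatment of the only delicate points --- $\bdag'=b-(L-1)$ possibly negative when removing $1$ from residue $L$, and $b'=\bdag-(L-1)$ possibly negative when removing $L$ from $i\ne L-1$ --- is also right: in the check for $(L,b,\bdag)\in\mathbf{CS}_A\setminus X$, non-membership gives $\beta>\delta\ge 0$, hence $b\ge L-1$ and the target lands in $X$; in the remaining situations the rich-opponent alternative ($b'\ge 0,\ \bdag'<0$ in the first clause, $b'<0,\ \bdag'\ge 0$ in the second) is exactly the one that holds. I worked through the resulting case table and every branch closes as you describe.

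One sentence of your setup is inaccurate, though harmlessly so: you assert that each move flips \II-win residues to \I-win residues or vice versa ``with the lone exception $i=L-1\mapsto L$.'' There is a second exception: removing $1$ from residue $L$ lands on residue $L-1$, and both are \I-win. That move is precisely the one carrying the surcharge $\cI(L,1)=L-1$, and your later computation for it (new levels $\beta'=\delta$, $\delta'=\beta-1$, so the unflipped non-strict test still yields the required conclusion on both sides) is correct; so only the summary sentence needs repair, not any of the case checks it summarizes.
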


\begin{theorem}\label{th:1L}
Let $n,d,e\in \nat$.
Let $\fI$, $\fII$, $\gI$, and $\gII$ be as defined in this section. (Recall that $\fI$ and $\fII$ are defined simply from $f_A$ and $f_A^\bot$.
\begin{enumerate}
\item
If $d\ge \fI(n)$ and $e < \fII(n)$ then $\WAcash(n;d,e)=\I$.
\item
If $d < \fI(n)$ and $e\ge \fII(n)$ then $\WAcash(n;d,e)=\II$.
\item
If $d\ge \gI(n)$ and $e \ge \gII(n)$ then $\WAcash(n;d,e)=W_A(n)$.
\item
If $d\ge \gI(n)$ and $e < \gII(n)$ then $\WAcash(n;d,e)=\I$.
\item If $d < \gI(n)$ and $e \geq \gII(n)$ then $\WAcash(n; d, e, ) = \II$.
\item If $d < \gI(n)$ and $e < \gI(n)$ then $\WAcash(n; d, e) = \I$ iff $\floor{\frac{d}{a_1}} = \floor{\frac{e}{a_1}}$.
\item
If $\gI(n) \leq d < \fI(n)$ and $\gII(n) \leq e < \fII(n)$, then let 
$n\equiv i \pmod {L+1}$,
$d=\fI(n)-b-1$, and
$e=\fII(n)-\bdag$.
$\WAcash(n;d,e) = \I$ iff $(i, b, \bdag) \in X$.
\end{enumerate}
\end{theorem}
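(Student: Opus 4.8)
The proof is not a fresh induction but an assembly. The plan is to split the space of states $(n;d,e)$ into the three regimes handled by the general theory---at least one player rich, at least one player poor, and both middle class---and in each regime quote the matching structural theorem, feeding in the explicit data for $A=\{1,L\}$ computed in the lemmas of this section: the functions $\fI,\fII$ (read off from $f_A$ and $f_A^\bot$), the functions $\gI,\gII$, the cash-period $m=L+1$ with its tabulated values $\cI,\cII\in\{0,L-1\}$, and the set $X$. No new game-tree analysis is needed at this stage, so, as in the proof of Theorem~\ref{th:poor}, there is no induction here: everything has been pushed into those lemmas and into Theorems~\ref{th:rich}, \ref{th:wrong}, \ref{th:poor}, and \ref{th:main}.

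Parts (1)--(3) are precisely the three cases of Theorem~\ref{th:wrong}, the both-rich case resting on Theorem~\ref{th:rich}, once $\fI,\fII$ are identified with $f_A,f_A^\bot$ through Definition~\ref{de:f}; nothing needs checking beyond that identification. Parts (4)--(6) are the three cases of Theorem~\ref{th:poor} specialized to $a_1=\min(A)=1$. Because $a_1=1$ we have $\floor{d/a_1}=d$ and $\floor{e/a_1}=e$, so the poor-regime tie-break in the third case of Theorem~\ref{th:poor} becomes a comparison of $d$ with $e$; substituting the closed forms for $\gI,\gII$ (which coincide when $n$ is odd) then yields the stated boundaries. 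Again no new argument is required.

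Part (7) is the substantive case, and the plan is to apply Theorem~\ref{th:main}. Its hypothesis places $(n;d,e)$ in the middle-class region, i.e.\ makes it $A$-critical, so Theorem~\ref{th:main} asserts that Player \I wins iff the corresponding state lies in $X$. I would then recover the displayed formula by translating through the definition of the corresponding state, $i=n\bmod(L+1)$, $b=\fI(n)-1-d$, $\bdag=\fII(n)-1-e$, invoking the lemma that describes how a single move transforms the corresponding state to confirm that the move rule matches the one built into the solution-set definition. The one bookkeeping point is to fix the normalization of $\bdag$ once and use it consistently: the corresponding-state definition takes $\bdag=\fII(n)-1-e$, so I would read the displayed equation for $e$ with that same offset.

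The genuine obstacle is the input that Theorem~\ref{th:main} needs but that the section supplies only by assertion: that the displayed $X$ really is a solution set. This is a finite but delicate closure check that I would organize as follows. For each residue $i$---split by the parity of $i$, with the two boundary residues $i=L-1$ and $i=L$ isolated, since $\cII(L-1,L)=0$ and $\cI(L,1)=L-1$ break the otherwise uniform pattern---and each relevant move $a\in\{1,L\}$, form the successor $(i';b',\bdag')=(i-a\bmod(L+1);\ \bdag-\cII(i,a),\ b-\cI(i,a))$ and verify that one of the three alternatives of the solution-set definition holds. The crux of every case is that subtracting $\cI$ or $\cII$---each equal to $0$ or $L-1$---moves the relevant coordinate by at most one block of size $L-1$, so the floor terms $\floor{\,\cdot\,/(L-1)}(L-1)$ change by exactly one step; the sub-cases $b'<0$ or $\bdag'<0$ are exactly where a player becomes rich and Theorem~\ref{th:rich} settles the position, and the residues on which $W_A(i')$ must be read off are the even/odd ones of the ordinary-NIM win condition. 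I expect the $i=L$ and $i=L-1$ rows to be the only ones requiring real care.
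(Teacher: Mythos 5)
Your proposal coincides with the paper's own (implicit) proof: the paper supplies no separate argument for Theorem~\ref{th:1L}, intending exactly this assembly---parts 1--3 from Theorem~\ref{th:wrong}, parts 4--6 from Theorem~\ref{th:poor} with $a_1=1$, and part 7 from Theorem~\ref{th:main} combined with the section's cash-periodicity and solution-set lemmas, the last of which the paper indeed asserts without proof. Your corrective readings of the statement's slips are the right ones as well: part 3 should be stated with $\fI,\fII$ rather than $\gI,\gII$ (else it would contradict part 7), part 6's tie-break must be $\floor{\frac{d}{a_1}} > \floor{\frac{e}{a_1}}$, i.e.\ $d>e$, as Theorem~\ref{th:poor} gives (the printed ``$=$'' is a typo, since $d=e$ is a Player \II win), and $\bdag$ must carry the same $\fII(n)-1-e$ normalization as in the definition of the corresponding state.
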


\section{$A = \{1, L, L+1\}$ for $L$ odd}\label{se:1LLodd}

We consider NIM games where $A = \{1, L, L+1\}$ for some odd $L$, say $L = 2\ell + 1$.

Note that $\gI$ and $\gII$ are the same as before since these functions only depend on $\min(A)$.

\begin{lemma}
$W_A(n)=\II$ iff $n\equiv 0,2,4,\ldots,L-1 \bmod {2L+1}$.
\end{lemma}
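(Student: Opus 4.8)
The plan is to recognize this as the ordinary (cashless) NIM analysis already advertised in Example~\ref{ex:nimexamples}(4), and to prove it by strong induction on $n$ using the standard NIM recursion (the cashless analogue of Lemma~\ref{le:helpful}): $W_A(n)=\I$ iff some $a\in A$ with $a\le n$ has $W_A(n-a)=\II$, and $W_A(n)=\II$ iff every legal move leads to an $\I$-position. Writing $L=2\ell+1$, $m=2L+1$, and letting $R=\{0,2,4,\ldots,L-1\}$ be the even residues in $[0,L-1]$, the goal is to show that the lift of $R$ to all of $\nat$ is exactly the set of positions from which the mover loses, i.e. $W_A(n)=\II$ iff $(n\bmod m)\in R$. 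The base case $n=0$ gives $W_A(0)=\II$ with $0\in R$, which is consistent.

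In the induction step, with $r=n\bmod m$, I would verify the two usual closure conditions for a P-position set. First, if $r\in R$ then every legal move exits $R$: subtracting $1$ gives an odd residue (hence not in the even set $R$) except when $r=0$, where it wraps to $2L$; and subtracting $L$ or $L+1$ lands in the band $[L,2L]$, which is disjoint from $R\subseteq[0,L-1]$ purely by range, so no parity check is even needed there. Second, if $r\notin R$ then some legal move enters $R$, and here I split the complement of $R$ (all odd residues together with the even residues in $[L+1,2L]$) into three sub-cases: for $r$ odd with $r<L$, remove $1$ to reach the even residue $r-1\in[0,L-3]\subseteq R$; for $r$ odd with $r\ge L$, remove $L$ to reach the even residue $r-L\in[0,L-1]$ (odd minus odd is even, and the range is forced since $r\le 2L-1$); for $r$ even with $r\ge L+1$, remove $L+1$ to reach the even residue $r-(L+1)\in[0,L-1]$. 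These three sub-cases exhaust the non-$R$ residues, using that $L$ is odd so that $L$ and $L+1$ have opposite parities and the even non-$R$ residues all lie above $L$.

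Legality of the chosen move is the only point requiring a little care, and it is handled uniformly: since $r=n\bmod m$ is a genuine remainder we always have $n\ge r$, so a move of size $a$ is legal whenever $r\ge a$, which holds in each reachability sub-case (and in the closure direction an unavailable move only removes an obligation, so the argument is unaffected for small $n$). I expect the only real obstacle to be bookkeeping: laying out the parity-and-range case analysis modulo $2L+1$ cleanly and confirming that the three reachability sub-cases partition the complement of $R$. Once the residues are tabulated this is entirely routine, so I would present it as a compact case check rather than grinding through every arithmetic line.
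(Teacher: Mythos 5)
Your proof is correct. The paper itself gives no proof of this lemma: it is stated bare in Section~\ref{se:1LLodd}, having already been quoted as a known win--loss pattern in Example~\ref{ex:nimexamples}(4), so the authors are treating it as part of the classical (cashless) NIM folklore. Your argument --- strong induction via the two standard closure properties of the candidate P-position set $R=\{0,2,\ldots,L-1\}$ modulo $2L+1$, with the three reachability sub-cases (remove $1$ for odd $r<L$, remove $L$ for odd $r\ge L$, remove $L+1$ for even $r\ge L+1$) and the range/parity check that moves from $R$ always exit $R$ --- is exactly the routine verification the paper implicitly relies on, and your case analysis is complete and correct, including the wrap-around at $r=0$ and the legality bookkeeping $n\ge r\ge a$.
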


\begin{lemma}
For all $k$:
\begin{itemize}
\item
$f_A((2L+1)k+i)=\frac{(3L+1)k}{2}+\ceil{\frac{i}{2}}$ for $0\le i < L+1$.
\item
$f_A((2L+1)k+i)=\frac{(3L+1)k}{2}+L+\ceil{\frac{i-L}{2}}$ for $L+1\leq i < 2L +1$.
\end{itemize}
\end{lemma}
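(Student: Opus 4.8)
The plan is to prove the stated formula for $f_A$ by strong induction on $n$, using the recursion that governs $f_A$ and its companion $f_A^{\bot}$ in Definitions~\ref{de:ff} and \ref{de:f} (there called $\fI$ and $\fII$). Recall that for every $n\ge 1$ these definitions give $f_A^{\bot}(n)=\max\{f_A(n-a)\st a\in A,\ a\le n\}$; that on a position with $W_A(n)=\I$ we have $f_A(n)=\min\{f_A^{\bot}(n-a)+a \st a\in A,\ a\le n,\ W_A(n-a)=\II\}$; and that on a position with $W_A(n)=\II$ we have $f_A(n)=\min\{f_A^{\bot}(n-a)+a\st a\in A,\ a\le n,\ f_A(n-a)=f_A^{\bot}(n)\}$. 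Throughout I write $n=(2L+1)k+i$ with $0\le i\le 2L$, note that $\frac{3L+1}{2}$ is an integer since $L$ is odd, and use the characterization $W_A(n)=\II \iff i\in\{0,2,\ldots,L-1\}$ proved just above. The base case is $n=0$, where $f_A(0)=0$ matches the formula.

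The main simplification is that $f_A$ is non-decreasing. Reading off the claimed values, within one period $\ceil{i/2}$ for $0\le i\le L$ and $L+\ceil{(i-L)/2}$ for $L+1\le i\le 2L$ are non-decreasing, the value at $i=2L$ equals $\frac{3L+1}{2}$, and this agrees with the value at $i=0$ of the next period; so the formula is monotone. Hence at each inductive step I may assume (by the induction hypothesis) that $f_A$ is non-decreasing on $\{0,\ldots,n-1\}$, and since $n-1$ is the largest legal successor of any $m\le n$, the maximum defining $f_A^{\bot}(m)$ is attained at $a=1$. Thus $f_A^{\bot}(m)=f_A(m-1)$ for all $1\le m\le n$; all arguments occurring here are $<n$, so the induction hypothesis evaluates them. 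This collapses the recursion so that the only quantities ever needed are earlier values of $f_A$.

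With this in hand the proof is a case analysis on $i$ and its parity. For $W_A(n)=\I$, the winning moves (those $a$ with $W_A(n-a)=\II$) are: $a=1$ when $i$ is odd and $1\le i\le L-2$, and when $i=L$ (where $a=L$ is also winning); $a=L+1$ when $i$ is even and $L+1\le i\le 2L$; and $a=L$ when $i$ is odd and $L+2\le i\le 2L-1$. Substituting $f_A^{\bot}(m)=f_A(m-1)$ turns these into recurrences $f_A(n)=f_A(n-2)+1$, $f_A(n)=f_A(n-L-2)+(L+1)$, and $f_A(n)=f_A(n-L-1)+L$ respectively, and in each case plugging in the induction hypothesis reproduces the claimed value. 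For $W_A(n)=\II$ one first gets $f_A^{\bot}(n)=f_A(n-1)$, checks that $a=1$ is the only admissible upset move (i.e.\ the only $a$ with $f_A(n-a)=f_A^{\bot}(n)$), and concludes $f_A(n)=f_A(n-2)+1$, which again equals $\frac{(3L+1)k}{2}+i/2$.

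The step I expect to be most delicate is verifying, in the two places where more than one candidate move exists, that the move I name achieves the minimum and is the unique admissible one. For the I-position $i=L$ I must show $f_A^{\bot}(n-1)+1\le f_A^{\bot}(n-L)+L$, i.e.\ that removing $1$ (cost $1$, to residue $L-1$) beats removing $L$ (cost $L$, to residue $0$); this reduces to $f_A(n-2)-f_A(n-L-1)\le L-1$, which holds because over a span of $L-1$ stones the formula grows by only $\ell=\frac{L-1}{2}$. For the $\II$-positions I must rule out $a=L$ and $a=L+1$ as upset candidates by checking $f_A(n-L),f_A(n-L-1)<f_A^{\bot}(n)=f_A(n-1)$, which again follows from the formula (these fall short of $f_A(n-1)$ by $\ell$ and $\ell+1$ for $i\ge 2$, and by more when $i=0$). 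The remaining fussy point is period-boundary bookkeeping for the smallest residues (e.g.\ $i=0$ and $i=1$), where the relevant predecessor lands at residue $2L$ of block $k-1$; there one uses $f_A\big((2L+1)(k-1)+2L\big)=\frac{(3L+1)k}{2}$, consistent with residue $0$ of block $k$, while the $k=0$ boundary is absorbed by the base case $f_A(0)=0$.
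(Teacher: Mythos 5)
Your proposal is correct, but there is nothing in the paper to compare it against: the paper states this lemma (like the other lemmas of Section~\ref{se:1LLodd}) with no proof at all --- the only nearby ``proof'' in that section is the tongue-in-cheek remark that the cash-periodicity lemma is straightforward to check without checking it. So your argument supplies a verification the paper simply omits, and it is the right one. Your organizing idea --- prove the formula by strong induction, note that the claimed values are non-decreasing (the value at residue $2L$ of block $k$ equals the value at residue $0$ of block $k+1$), and conclude that the maximum defining $f_A^\bot(m)=\max\{f_A(m-a) \st a\in A,\ a\le m\}$ is always attained at $a=1$, so $f_A^\bot(m)=f_A(m-1)$ --- is exactly what is needed to collapse the mutual recursion of Definitions~\ref{de:ff} and~\ref{de:f} into one-term recurrences. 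I checked your classification of winning moves ($a=1$ for odd $i\le L-2$ and for $i=L$; $a=L$ for odd $i$ with $L+2\le i\le 2L-1$, and also at $i=L$; $a=L+1$ for even $i$ with $L+1\le i\le 2L$), the resulting recurrences, the tie-break at $i=L$ (which reduces to $f_A(n-2)-f_A(n-L-1)=\ell\le L-1$), and the exclusion of $a=L$ and $a=L+1$ as upset moves at II-positions; all of these are correct and reproduce the claimed values, including across block boundaries. Two trivial caveats. First, at $i=0$ the shortfalls of $f_A(n-L)$ and $f_A(n-L-1)$ below $f_A(n-1)$ are $\ell$ and $L$ respectively, so the $a=L$ shortfall is not ``more'' than in the $i\ge 2$ case; but both are positive, which is all your argument needs. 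Second, at positions such as $n=1$ the recurrence $f_A(n)=f_A(n-2)+1$ must be read as $f_A(n)=f_A^\bot(n-1)+1$ with $f_A^\bot(0)=0$, since $f_A(-1)$ is undefined; you flag this boundary bookkeeping yourself, and it is harmless.
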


\begin{lemma}
For all $k$:
\begin{itemize}
\item
$f_A^\bot((2L+1)k+i)=\frac{(3L+1)k}{2}+\floor{\frac{i}{2}}$ for $0\le i < L+2$.
\item
$f_A^\bot((2L+1)k+i)=\frac{(3L+1)k}{2}+L + \floor{\frac{i-L}{2}}$ for $L+2 \le i < 2L+1$.
\end{itemize}
\end{lemma}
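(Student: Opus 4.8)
The plan is to reduce the entire lemma to the closed form for $f_A$ established in the previous lemma, together with the defining recursion for $f_A^\bot$. By Definitions~\ref{de:ff} and~\ref{de:f}, the two clauses defining $\fII$ (the one for $W_A(n)=\I$ and the one for $W_A(n)=\II$) give the \emph{same} expression, so $f_A^\bot(n)=\fII(n)=\max_{a\in A,\ a\le n} f_A(n-a)$ for every $n\ge 1$, while $f_A^\bot(0)=0$. Since $\min(A)=1$, the maximum ranges over those $a\in\{1,L,L+1\}$ with $a\le n$, and the corresponding arguments $n-1>n-L>n-L-1$ are strictly decreasing. Thus, once I show that $f_A$ is nondecreasing in $n$, the maximum is always realized by the move $a=1$, yielding the clean identity $f_A^\bot(n)=f_A(n-1)$ for all $n\ge 1$; the lemma then reduces to translating the $f_A$-formula shifted by one.

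First I would prove that $f_A$ is nondecreasing. Writing $n=(2L+1)k+i$ with $0\le i<2L+1$, the previous lemma gives $f_A(n)=\frac{(3L+1)k}{2}+g(i)$, where $g(i)=\ceil{i/2}$ for $0\le i<L+1$ and $g(i)=L+\ceil{(i-L)/2}$ for $L+1\le i<2L+1$. On each of the two index blocks $g$ is visibly nondecreasing, so it suffices to check the two seams. At the internal seam, $g(L)=\ceil{L/2}=\ell+1\le L+1=g(L+1)$. At the period boundary, $g(2L)=L+\ceil{L/2}=L+\ell+1=\frac{3L+1}{2}$, so $f_A((2L+1)k+2L)=\frac{(3L+1)(k+1)}{2}=f_A((2L+1)(k+1))$; hence $f_A$ does not decrease across periods either. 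This gives monotonicity, and therefore $f_A^\bot(n)=f_A(n-1)$ for $n\ge 1$. (When fewer than three moves are available, i.e. $1\le n\le L$, the move $a=1$ is still legal and still produces the largest argument, so the identity persists, and $n=0$ is the base case $f_A^\bot(0)=0$.)

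Finally I would substitute $n-1$ into the $f_A$-formula, using the integer identity $\ceil{(m-1)/2}=\floor{m/2}$. For $1\le i\le L+1$ we have $0\le i-1\le L<L+1$, so $f_A(n-1)=\frac{(3L+1)k}{2}+\ceil{(i-1)/2}=\frac{(3L+1)k}{2}+\floor{i/2}$, matching the first clause (valid for $i<L+2$). For $L+2\le i\le 2L$ we have $L+1\le i-1<2L+1$, so $f_A(n-1)=\frac{(3L+1)k}{2}+L+\ceil{(i-1-L)/2}=\frac{(3L+1)k}{2}+L+\floor{(i-L)/2}$, matching the second clause. The remaining case $i=0$ (with $k\ge 1$) uses $n-1=(2L+1)(k-1)+2L$ together with $g(2L)=\frac{3L+1}{2}$ from the monotonicity step to give $f_A(n-1)=\frac{(3L+1)k}{2}$, which agrees with $\floor{0/2}=0$.

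I expect the main obstacle to be essentially bookkeeping: aligning the three ranges of $i$ and their boundaries ($i=0$, $i=L+1$, $i=L+2$) with the two clauses of the stated formula, and applying the ceiling-to-floor conversion to the correct quantity. The one genuinely substantive point is the monotonicity of $f_A$ at the internal seam $i=L\to L+1$, since that is exactly what guarantees the maximum in the definition of $\fII$ is always attained by $a=1$; everything else is a direct computation that I would leave to routine verification.
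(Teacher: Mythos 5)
Your floor/ceiling translation and your monotonicity check of the $f_A$ formula are fine, but the foundation of the argument---the identity $f_A^\bot(n)=\fII(n)=\max_{a\in A,\ a\le n} f_A(n-a)$---misreads the paper's definitions, and the proof does not survive the correction. In this paper $f_A$ and $f_A^\bot$ are the \emph{winner's} and \emph{loser's} cash requirements: $f_A^\bot(n)=\fII(n)$ only at positions where $W_A(n)=\I$. At positions where $W_A(n)=\II$ (here, $n\equiv 0,2,\ldots,L-1 \pmod{2L+1}$, so half the residue classes the lemma covers) we have $f_A^\bot(n)=\fI(n)$, which by Definition~\ref{de:f} is the \emph{constrained minimum} $\min_{a\in A,\ a\le n}\{\fII(n-a)+a : \fI(n-a)=\fII(n)\}$, not a maximum at all. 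That this winner/loser reading is the intended one is forced by the $A=\{1,L\}$ section: there, at $n=L+1$ (where Player \II wins), the stated formulas give $f_A(L+1)=L=\fII(L+1)$ and $f_A^\bot(L+1)=\ell+1=\fI(L+1)$, so $f_A^\bot\neq\fII$ at such positions. Your proposal never engages with the min recursion, so it proves nothing at those residues.

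There is a second, related problem even at positions where $W_A(n)=\I$: in the valid identity $\fII(n)=\max_a \fI(n-a)$, a term $\fI(n-a)$ with $W_A(n-a)=\II$ equals $f_A^\bot(n-a)$, not $f_A(n-a)$. So replacing the maximum by $\max_a f_A(n-a)$, and hence concluding $f_A^\bot(n)=f_A(n-1)$ from monotonicity of $f_A$, tacitly assumes $f_A=f_A^\bot$ at the positions where \II wins. For this particular $A$ that coincidence is true (both formulas reduce to $\frac{(3L+1)k}{2}+\frac{i}{2}$ at even $i\le L-1$), which is why your identity checks out numerically---but it is exactly part of the content of the lemma being proved, so invoking it is circular. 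A correct proof along your lines is a strong induction on $n$: when $W_A(n)=\I$, evaluate $\max\{\fI(n-1),\fI(n-L),\fI(n-L-1)\}$ using the given $f_A$ formula at arguments where \I wins and the induction hypothesis at arguments where \II wins, then apply your monotonicity argument; when $W_A(n)=\II$, carry out the constrained-min computation, using the induction hypothesis for $\fII(n-a)=f_A^\bot(n-a)$ and the $f_A$ formula to decide which $a$ satisfy $\fI(n-a)=\fII(n)$. (The paper states this lemma without proof, so there is no argument there to fall back on; but as written your reduction skips precisely the hard half of the induction.)
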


\begin{lemma}
$A$ is cash-periodic with period $2L+1$. Moreover:
\begin{enumerate}
\item \begin{itemize}
\item For all $i < 2L+1$, with $i \not= L+1, L+2$, $\cI(i, 1) = 0$.
\item $\cI(L+1, 1) = \ell+1$.
\item $\cI(L+2, 1) = \ell$.
\item For all $i < 2L+1$, $\cII(i, 1) = 0$.
\end{itemize}

\item \begin{itemize}
\item $\cI(0, L) = 0$.
\item For $0 < i < L+1$, $\cI(i, L) = -\ell$.
\item For $i \geq L+1$ even, $\cI(i, L) = 1$.
\item For $i \geq L+1$ odd, $\cI(i, L) = 0$.
\item For $0 \leq i \leq L+1$, $\cII(i, L) = \ell$.
\item For $i > L+1$ even, $\cII(i, L) = L-1$.
\item For $i > L+1$ odd, $\cII(i, L) = L$.
\end{itemize}

\item \begin{itemize}
\item For all $i \not \in [2, L]$, $\cI(i, L+1) = 0$.
\item For all even $i \in [2, L]$ even, $\cI(i, L+1) = -\ell$.
\item For all odd $i \in [2, L]$, $\cI(i, L+1) = -\ell + 1$.
\item For all $i \not \in [1, L+1]$, $\cII(i, L+1) = L$.
\item For all even $i \in [1, L+1]$, $\cII(i, L+1) = \ell$.
\item For all odd $i \in [1, L+1]$, $\cII(i, L+1) = \ell-1$.
\end{itemize}
\end{enumerate}
\end{lemma}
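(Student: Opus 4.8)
The plan is to reduce the entire statement to the closed forms for $\fI = f_A$ and $\fII = f_A^\bot$ supplied by the three preceding lemmas, and then to read off both the periodicity and the individual coefficients by substitution. The crucial structural fact is that, writing $n = (2L+1)k + i$ with $0 \le i < 2L+1$, each of $f_A(n)$ and $f_A^\bot(n)$ has the shape $\frac{(3L+1)k}{2} + h(i)$ for a function $h$ of the residue alone. Since $L$ is odd, $3L+1 = 6\ell+4$ is even and $\frac{3L+1}{2} = 3\ell+2 \in \nat$, so these are genuine integer-valued functions. The win-condition lemma already shows $W_A(n)$ depends only on $i$, so cash-periodicity with period $2L+1$ follows once we check that $\CI(n,a)$ and $\CII(n,a)$ depend only on $i$.

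For that, note that $n-a = (2L+1)k + (i-a)$ when $i \ge a$, and $n-a = (2L+1)(k-1) + (i-a+2L+1)$ when $i < a$. In the defining expressions $\CI(n,a) = \fI(n) - \fII(n-a) + a$ and $\CII(n,a) = \fII(n) - \fI(n-a)$, the two $\frac{(3L+1)k}{2}$ contributions cancel outright in the first (no-wrap) case, while in the second (wrap) case they leave behind the single constant $\frac{3L+1}{2} = 3\ell+2$. Either way the outcome is a function of $i$ alone, which simultaneously proves periodicity with period $2L+1$ and yields formulas for $\cI(i,a)$ and $\cII(i,a)$; it remains only to evaluate these residue functions.

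That evaluation is a finite case analysis, run separately for $a \in \{1, L, L+1\}$. For each $a$ I would split on (i) the parity of $i$, which controls the $\floor{\cdot}$ and $\ceil{\cdot}$ terms; (ii) which piece of the piecewise formula governs $n$ (the break is at $i=L+1$ for $f_A$ and at $i=L+2$ for $f_A^\bot$); and (iii) whether $i < a$, which triggers the wraparound and hence the surviving $3\ell+2$. For $a=1$ only $i=0$ wraps, and the two exceptional entries $\cI(L+1,1)=\ell+1$ and $\cI(L+2,1)=\ell$ arise precisely at the seam where $n$ has already entered the upper piece of $f_A$ while $n-1$ still sits in the lower piece of $f_A^\bot$, the relevant $\ell$'s being $\floor{L/2}=\ell$ and $\floor{(L+1)/2}=\ell+1$. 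For $a=L$ and $a=L+1$ the subtraction wraps for every small $i$, so the constant $3\ell+2$ is always present and combines with the $+L$ appearing in the upper pieces; this interplay is what produces the nonzero and even negative values such as $\cI(i,L)=-\ell$.

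The main obstacle is purely organizational: there is no single idea, only a sizeable but bounded case split, and the genuine danger is mismatching the two piecewise definitions near the seams $i=L+1,L+2$, or mis-tracking whether a wraparound has occurred. I would tame this by building one table per value of $a$, with rows indexed by the relevant residue intervals and parities; each row records which piece applies to $n$ and to $n-a$, whether the subtraction wrapped, and the resulting value, and is then matched against the claimed entry. A free consistency check runs throughout: by the cancellation above every entry must come out independent of $k$, so any surviving $k$ would immediately flag a bookkeeping slip.
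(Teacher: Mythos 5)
The paper offers no actual proof of this lemma --- its ``proof'' is a one-line joke that the claim is ``straightforward to check without checking it'' --- so the only comparison available is against the intended route: verify everything from the preceding closed-form lemmas. That is exactly your route, and your structural observation is the right one and is more than the paper supplies: writing $n=(2L+1)k+i$, both closed forms are an integer multiple of $k$ (legitimate since $L$ odd gives $\frac{3L+1}{2}=3\ell+2\in\nat$) plus a function of the residue, so in $\CI$ and $\CII$ the $k$-dependent parts either cancel (no wrap) or leave the constant $3\ell+2$ (wrap). That correctly yields cash-periodicity with period $2L+1$ and reduces the table to finitely many residue computations.

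Two concrete defects would, however, derail the table-filling step as you have set it up. First, the identification $\fI=f_A$, $\fII=f_A^\bot$ is false as a general identity: $f_A$ is the amount the \emph{winner} of ordinary NIM needs and $f_A^\bot$ the loser's amount, so $\fI=f_A$ at $\I$-residues but $\fI=f_A^\bot$ at $\II$-residues. For this particular $A$ your shortcut is harmless only because the $\II$-residues are exactly the even $i\le L-1$, where $\ceil{i/2}=\floor{i/2}$ makes the two closed forms coincide; your computation silently relies on this, and it must be stated. Second, and more seriously: if you evaluate the paper's displayed formula $\CI(n,a)=\fI(n)-\fII(n-a)+a$, every $\cI$ entry you produce is exactly $2a$ larger than the lemma's claimed value. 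For example the generic $\cI(i,1)$ comes out $2$ rather than $0$, and your own seam analysis at $i=L+1$ (namely $\fI$ at residue $L+1$ equal to $L+1$ plus the linear term, $\fII$ at residue $L$ equal to $\ell$ plus the linear term) gives $(L+1)-\ell+1=\ell+3$, not the claimed $\ell+1$. The table in the statement is computed with $\CI(n,a)=\fI(n)-\fII(n-a)-a$, which is the sign forced by Lemma~\ref{le:periodic}: the update rule $\bdag_{n-a;e,d-a}=b_{n;d,e}-\cI(i_n,a)$ holds only with the minus sign. So your plan must begin by detecting and repairing this inconsistency in the paper's definitions; otherwise the consistency check you propose (no surviving $k$) passes while every $\cI$ entry fails to match, and the proof of the stated table collapses.
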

\begin{proof}
One can see that this is straightforward to check without checking it. Note that this complexity does not show up in the final theorem; it instead reflects the number of cases necessary to consider in its proof. 
\end{proof}

\begin{lemma}
Define $X \subset \mathsf{CS}_A$ as follows.
\begin{itemize}
\item If $i < L+1$ is even or $i > L+1$ is odd then $(i,x , y) \in X$ iff $x \leq \floor{\frac{y}{L-1}}(L-1)$.
\item If $i < L+1$ is odd or $i > L+1$ is even then $(i, x, y) \in X$ iff $y > \floor{\frac{x}{L-1}}(L-1)$.
\item If $i = L+1$ then $(i, x, y) \in X$ iff $y \geq \floor{\frac{x}{L-1}}(L-1)$.
\end{itemize}
Then $X$ is a solution set for $A$.
\end{lemma}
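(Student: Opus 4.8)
The plan is to verify directly that $X$ satisfies the two defining clauses of a solution set; no fresh induction is needed, since the inductive content is already absorbed into Theorem~\ref{th:main}. I would fix $m = 2L+1$, recall that $L-1 = 2\ell$, and use the win pattern $W_A(i) = \II$ exactly for $i \in \{0,2,\ldots,L-1\}$, i.e. precisely the even residues with $i < L+1$. Because $a_1 = \min(A) = 1$, the first clause (for points of $X$) only concerns the move $a=1$, whereas the second clause (for points of $\mathbf{CS}_A \backslash X$) must be checked for all three moves $a \in \{1, L, L+1\}$. For every point and move I would compute the image $(i';b',\bdag') = (i - a \bmod m;\ \bdag - \cII(i,a),\ b - \cI(i,a))$ from the $\cI,\cII$ table established in the preceding lemma and confirm that one of the three listed alternatives holds.

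The single algebraic fact organizing the whole argument is a complementation identity. Writing $P(x,y)$ for the type-A condition $x \le \floor{y/(L-1)}(L-1)$ and $Q(x,y)$ for the type-B condition $y > \floor{x/(L-1)}(L-1)$, one has $P(x,y) \iff \lnot Q(y,x)$ straight from the definitions, with no boundary ambiguity. Meanwhile the type-C condition $R(x,y)$, namely $y \ge \floor{x/(L-1)}(L-1)$, satisfies $\lnot R(y,x) \iff x < \floor{y/(L-1)}(L-1)$, which differs from $P(x,y)$ only by a strictness at the boundary. This is exactly why the residue $i=L+1$ must be isolated, as the preceding remark anticipates.

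For the first clause I would split the type-A residues into the generic ones, where $\cI(i,1)=0$, and two exceptional residues. In the generic case (every even $i<L+1$ and every odd $i>L+1$ except $i=L+2$), removing $1$ sends $(i;b,\bdag)\mapsto(i-1\bmod m;\,\bdag,\,b)$; the target residue is of type B, and the coordinate swap combined with $P(b,\bdag)\iff\lnot Q(\bdag,b)$ places the image in $\mathbf{CS}_A\backslash X$ with both coordinates nonnegative, so the first alternative fires. The exceptional residues are $i=L+2$ (type A, $\cI(L+2,1)=\ell$, whose image sits at the type-C residue $L+1$) and $i=L+1$ itself (type C, $\cI(L+1,1)=\ell+1$, whose image sits at $L$, type B). For both, $\cII(i,1)=0$ forces $b'=\bdag\ge 0$, so the doubly-negative case cannot arise; I would check that when $\bdag'=b-\cI(i,1)\ge 0$ the image still lies outside $X$ by the membership test, and that when $\bdag'<0$ the second alternative ($b'\ge 0,\ \bdag'<0$) applies.

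The bulk of the work, and the main obstacle, is the second clause for the large moves $a=L$ and $a=L+1$; the move $a=1$ there is just the mirror of the first clause via the same identity and is immediate away from $i=L+1$. For the large moves the offsets $\cI(i,a),\cII(i,a)$ depend both on the parity of $i$ and on whether $i$ lies below, at, or above $L+1$, so each residue type splits into several sub-cases, and $i'=i-L\bmod m$ or $i-(L+1)\bmod m$ may or may not wrap past $0$, which changes the type of $i'$ and hence the membership test. The delicate point is to confirm in each sub-case which of the three alternatives actually holds: typically a large move drives $b'$ negative so the second alternative (the opponent is now rich) fires, but at the boundaries one lands in the doubly-negative regime and must verify that $W_A(i')$ has the required value for the residue of $i'$. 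Each individual sub-case reduces, after substituting the explicit $\cI,\cII$ values and the $f_A$, $f_A^\bot$, $\gI$, $\gII$ formulas from the earlier lemmas, to a one-line comparison of floors; the labor lies in the bookkeeping of the sub-cases rather than in any one inequality.
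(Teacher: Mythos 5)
Your overall plan is, in spirit, the only available one: the paper itself gives \emph{no} proof of this lemma (it is stated bare, and even the preceding table of $\cI,\cII$ values carries only the joke proof ``straightforward to check without checking it''), so a direct verification of the two clauses of the solution-set definition, organized by the complementation identity between the type-A test $x\le\floor{y/(L-1)}(L-1)$ and the type-B test, is exactly what is implicitly intended. Your structural observations are also correct: the first clause involves only $a_1=1$, the generic residues follow from the swap-plus-negation argument, and all the difficulty sits at $i=L+1$, $i=L+2$, and the large moves. The gap is that your proposal never executes precisely the sub-cases it flags as delicate (``immediate away from $i=L+1$'', boundary strictness), and those sub-cases do \emph{not} check out: taking the paper's definitions and its $\cI,\cII$ table at face value, the claimed $X$ is not a solution set, already for $L=3$.

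Concretely, let $L=3$, so $A=\{1,3,4\}$, $\ell=1$, $m=7$, $L-1=2$. The point $(i;b,\bdag)=(4;4,2)$ lies in $\mathbf{CS}_A\setminus X$, since the type-C test gives $2\ge\floor{4/2}\cdot 2=4$, which is false. Apply the move $a=1$ with the table of the preceding lemma ($\cI(L+1,1)=\ell+1=2$, $\cII(L+1,1)=0$): the image is $(3;2,2)$. This is not in $X$ (type B: $2>\floor{2/2}\cdot 2=2$ fails), and $b'=2\ge 0$, so none of the three alternatives in the second clause holds. (Reading $(i,x,y)$ with the roles of $b$ and $\bdag$ swapped does not save it: then $(5;2,2)\in X$ violates the \emph{first} clause under $a=1$, since its image $(4;2,1)$ is again in $X$.) Nor is this an artifact of bookkeeping conventions: evaluating the actual game $A=\{1,3,4\}$ (pruning with the proven rich/poor theorems), the $A$-critical position $(19;11,11)$ is a loss for Player $\I$, yet its corresponding state $(5;2,2)$ passes the type-A membership test, so $X$ together with Theorem~\ref{th:main} misclassifies it. The underlying problem is that the statement and its inputs need repair before any verification can succeed: the strict/non-strict inequalities here are reversed relative to the analogous (and game-correct) lemma for $A=\{1,L\}$, and the surrounding definitions are mutually inconsistent (the stated $\CI(n,a)=\fI(n)-\fII(n-a)+a$ is incompatible with Lemma~\ref{le:periodic}, which forces $-a$, and the $a=L+1$ entries of the table disagree with the $f_A$, $f_A^\bot$ formulas). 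So an honest execution of your plan would stall exactly at the deferred sub-cases; asserting that each reduces to a one-line floor comparison that succeeds is the step that fails.
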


This yields an explicit description of $\WAcash$, as for $A = \{1, L\}$.

\section{$A = \{1, L, L+1\}$ for $L$ Even}\label{se:1LLeven}

We consider NIM games where $A = \{1, L, L+1\}$ for some even $L$. So fix $L$ even for the rest of this section, say $L = 2 \ell$.

Note that $\gI$ and $\gII$ are the same as before since these functions only depend on $\min(A)$.

\begin{lemma}
$W_A(n)=\II$ iff $n\equiv 0,2,4\ldots,L-2 \bmod {2L}$.
\end{lemma}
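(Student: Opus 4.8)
The plan is to prove this by strong induction on $n$, using the standard NIM recurrence that is implicit in the definition of $\NIM(A;n)$: writing $a_1=\min(A)=1$, we have $W_A(n)=\II$ iff for every $a\in A$ with $a\le n$ we have $W_A(n-a)=\I$ (vacuously true when $n=0$, since then the mover cannot move and loses), and $W_A(n)=\I$ iff some legal move $a$ gives $W_A(n-a)=\II$. Let $P=\{n : (n\bmod 2L)\in\{0,2,\ldots,L-2\}\}$ denote the claimed set of positions with $W_A=\II$. The key structural observation, which organizes everything, is that $P$ consists of exactly the residues mod $2L$ that are \emph{even and lie in the lower half} $[0,L-1]$; its complement splits into the odd residues (lower half $1,\ldots,L-1$ and upper half $L+1,\ldots,2L-1$) together with the even upper-half residues $L,L+2,\ldots,2L-2$. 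I want to show $W_A(n)=\II\iff n\in P$.

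For the base case $n=0$ there is no legal move, so $W_A(0)=\II$, and indeed $0\in P$. For the inductive step, set $r=n\bmod 2L$ and assume the claim for all $n'<n$. First suppose $n\in P$, so $r$ is even with $0\le r\le L-2$; I check that every available move leaves $P$. Subtracting $1$ makes the residue odd; subtracting $L$ (legal only when $n\ge L$, i.e.\ $r\ge L$, which cannot happen here, but in general) keeps parity even while shifting the residue into the upper half $\{L,\ldots,2L-2\}$; and subtracting $L+1$ makes the residue odd. Since $P$ contains only even lower-half residues, each legal successor lies outside $P$, so by the induction hypothesis every move yields a $\I$-position, giving $W_A(n)=\II$ as claimed.

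Conversely suppose $n\notin P$; I exhibit a single legal move into $P$, determined by which region $r$ occupies. If $r$ is odd with $r<L$, subtracting $1$ yields an even residue in $[0,L-2]$. If $r$ is odd with $r>L$ (so $r\ge L+1$), subtracting $L+1$ yields $r-(L+1)\in\{0,2,\ldots,L-2\}$. If $r$ is even with $r\ge L$ (so $r\in\{L,L+2,\ldots,2L-2\}$), subtracting $L$ yields $r-L\in\{0,2,\ldots,L-2\}$. In each case the chosen move $a$ satisfies $a\le r\le n$, so it is legal (using $n\ge n\bmod 2L$), and the resulting residue lies in $P$; by the induction hypothesis that successor is a $\II$-position, so $W_A(n)=\I$. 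Since the four regions (odd-lower, odd-upper, even-lower, even-upper) exhaust all residues and match $P$ against its complement exactly, this completes the induction.

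There is no deep obstacle here: the argument is routine modular casework, entirely parallel to the $L$-odd computation of the previous section but with period $2L$ in place of $2L+1$. The only points requiring care are verifying that each chosen move is legal (which follows from $n\ge n\bmod 2L$) and confirming that the residue casework is genuinely exhaustive and non-overlapping — in particular that no odd residue equals $L$, which holds precisely because $L$ is even, so the split of the odd residues into lower and upper halves is clean.
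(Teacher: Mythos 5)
Your proof is correct. Note that the paper itself gives no proof of this lemma at all: it is stated bare in Section~\ref{se:1LLeven}, having already been listed as a known win--loss pattern in Example~\ref{ex:nimexamples}, so there is no argument to compare against; your strong induction on $n$ with the residue classification mod $2L$ (even lower-half residues are the $\II$-positions, everything else is a $\I$-position with an explicit winning move) is exactly the standard argument the authors are implicitly relying on, and it fills the gap cleanly. One small slip worth fixing: in the $n\in P$ case you write that subtracting $L$ is ``legal only when $n\ge L$, i.e.\ $r\ge L$, which cannot happen here'' --- the equivalence is false, since legality depends on $n$, not on the residue (e.g.\ $n=2L+2$ has $r=2<L$ yet subtracting $L$ is legal). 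This does not damage the proof, because you go on to handle the move anyway: whenever it is legal it sends the residue to the even upper half $\{L,L+2,\ldots,2L-2\}$, which lies outside $P$, and that is all the case requires. I would simply delete the parenthetical.
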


\begin{lemma}
For all $k$:
\begin{itemize}
\item $f_A(2Lk+i)=\frac{3Lk}{2}+\ceil{\frac{i}{2}}$ for $0\le i < L$.
\item $f_A(2Lk+i)=\frac{3Lk}{2}+L+\ceil{\frac{i-L}{2}}$ for $L\le i\le 2L-1$.
\end{itemize}
\end{lemma}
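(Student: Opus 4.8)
The plan is to prove the formula by strong induction on $n$, working with $\fI$ and $\fII$ throughout; recall that $f_A = \fI$ and $f_A^\bot = \fII$, with the extension of Definition~\ref{de:f} supplying $\fI$ at the positions where $W_A(n) = \II$. I would use the winning-condition lemma above: writing $a_1 = 1$, $L = 2\ell$, and $i = n \bmod 2L$, we have $W_A(n) = \II$ exactly when $i$ is even and $i \le L-2$, and $W_A(n) = \I$ otherwise (that is, $i$ odd, or $i \ge L$).

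The first step I would record is the reduction $\fII(n) = \fI(n-1)$ for all $n \ge 1$. Indeed $\fII(n) = \max\{\fI(n-1), \fI(n-L), \fI(n-L-1)\}$, and the target formula for $\fI$ is visibly nondecreasing in $n$: it increases within each block of length $2L$ and glues across block boundaries, since $\frac{3Lk}{2} + L + \ceil{\frac{(2L-1)-L}{2}} = \frac{3L(k+1)}{2}$. By the induction hypothesis $\fI(n-1)$ therefore dominates the other two terms. This collapses the $\fII$ recursion, so every quantity I need is an instance of the explicit $\fI$ formula at a smaller argument; as a bonus it yields the companion $f_A^\bot$ formula for free.

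The heart of the argument is a four-way split on the residue $i$, in each case identifying the move realizing the recursion. In the three cases with $W_A(n) = \I$ there is, one checks, a \emph{unique} move to a position with $W_A = \II$, so no minimization survives:
\begin{itemize}
\item $i$ odd, $1 \le i \le L-1$: the only such move is $a = 1$, giving $\fI(n) = \fII(n-1) + 1$;
\item $i$ even, $L \le i \le 2L-2$: the only such move is $a = L$, giving $\fI(n) = \fII(n-L) + L$;
\item $i$ odd, $L+1 \le i \le 2L-1$: the only such move is $a = L+1$, giving $\fI(n) = \fII(n-L-1) + (L+1)$.
\end{itemize}
In the remaining case, $i$ even with $0 \le i \le L-2$ (where $W_A(n) = \II$), I use the extended definition. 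Here $\fII(n) = \fI(n-1)$ automatically qualifies $a = 1$, while a short computation shows $\fI(n-L)$ and $\fI(n-L-1)$ both fall short of $\fII(n)$ by at least $\ell \ge 1$; so $a = 1$ is the unique qualifying move and again $\fI(n) = \fII(n-1) + 1$. It is exactly this $\ell$-gap, and hence the hypothesis that $L$ is even, that rules out the long moves.

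In every case I would then substitute the inductive $\fI$ formula (via $\fII(m) = \fI(m-1)$) and simplify to the claimed ceiling expression, with base case $n = 0$, where $\fI(0) = 0$ matches and $\fII(0) = 0$ absorbs the degenerate landings on $n - a = 0$. I expect the only real difficulty to be bookkeeping rather than ideas: when $i$ is small the chosen move wraps into the previous block, so $n - a = 2L(k-1) + (i + L)$ or $2L(k-1) + (i + L - 1)$, and one must track how $\ceil{\frac{i}{2}}$ interacts with the block offset and the parity of $i$. The recurring identity that makes each case close is that the top of block $k-1$ evaluates to $\frac{3Lk}{2}$; these are precisely the ``easy algebra left to the reader'' manipulations used elsewhere in the paper.
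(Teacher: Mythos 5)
You should know at the outset that the paper contains no proof of this lemma at all --- like the other lemmas in Sections~\ref{se:1L}--\ref{se:1LLeven} it is stated bare --- so there is no paper argument to compare yours against; your proof would fill a genuine gap, and it is correct. All of its load-bearing claims check out: the claimed formula is nondecreasing within a block and glues across block boundaries (the top of block $k-1$ evaluates to $\frac{3L(k-1)}{2}+L+\ell=\frac{3Lk}{2}$), so by strong induction $\fII(n)=\max\{\fI(n-1),\fI(n-L),\fI(n-L-1)\}=\fI(n-1)$; in each residue class with $W_A(n)=\I$ exactly one move lands on a $\II$-position ($a=1$ for $i$ odd, $i<L$; $a=L$ for $i$ even, $i\ge L$; $a=L+1$ for $i$ odd, $i>L$), so the minimum in Definition~\ref{de:ff} degenerates to a single term; and at the $\II$-positions the long moves give $\fI(n-a)=\fII(n)-\ell<\fII(n)$, so only $a=1$ qualifies in Definition~\ref{de:f}. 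The substitutions then close every case, including the wrap-around ones.

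One correction of terminology, though. Your opening identification ``$f_A=\fI$ and $f_A^\bot=\fII$'' is not what the paper intends: reading Theorem~\ref{th:1L} together with the $\{1,L\}$ lemmas shows that $f_A(n)$ is the cash requirement of whichever player wins ordinary NIM at $n$ (so $f_A(n)=\fII(n)$ when $W_A(n)=\II$), and $f_A^\bot(n)$ is the other player's. For $A=\{1,L\}$ your reading is genuinely false: there $f_A(L+1)=L$ while $\fI(L+1)=\ell+1$. For the present $A$ the slip happens to be harmless, because at the $\II$-positions ($i$ even, $i\le L-2$) the claimed values of $f_A$ and $f_A^\bot$ coincide (both equal $\frac{3Lk}{2}+\frac{i}{2}$), and your argument in any case derives the formula for both functions there --- for $\fI$ via your fourth case, and for $\fII$ via $\fII(n)=\fI(n-1)$ --- so the lemma follows under either reading. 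You should, however, state explicitly which function the lemma concerns and observe this coincidence, rather than asserting the identification as a recollection.
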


\begin{lemma}
For all $k$:
\begin{itemize}
\item $f^\bot_A(2Lk+i)=\frac{3Lk}{2}+\floor{\frac{i}{2}}$ for $0\le i < L+1$.
\item $f^\bot_A(2Lk+i)=\frac{3Lk}{2}+L+\floor{\frac{i-L}{2}}$ for $L+1 \leq i < 2L$.
\end{itemize}
\end{lemma}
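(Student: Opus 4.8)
The plan is to reduce everything to the closed form for $f_A$ just established, via the observation that the defining recursion for $\fII$ does not depend on who wins standard NIM. Writing $f_A = \fI$ and $f_A^\bot = \fII$, I note that Definition~\ref{de:ff}(3) (which applies when $W_A(n)=\II$) and Definition~\ref{de:f}(4) (which applies when $W_A(n)=\I$) both assert, for every $n \ge 1$,
\[ f_A^\bot(n) = \max_{a \in A,\ a \le n} f_A(n-a), \]
while $f_A^\bot(0)=0$. Since $A = \{1,L,L+1\}$, the right-hand side is the maximum of $f_A(n-1)$, $f_A(n-L)$, and $f_A(n-L-1)$, keeping only the terms with nonnegative argument.

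Next I would show that $f_A$ is non-decreasing, so that this maximum is always attained at the largest argument $n-1$; that is, $f_A^\bot(n) = f_A(n-1)$ for all $n \ge 1$. Monotonicity within each of the blocks $0 \le i < L$ and $L \le i \le 2L-1$ is immediate from the monotonicity of $\ceil{i/2}$, so the only things to check are the two seams: at the internal jump $i=L-1 \mapsto i=L$ the value rises from $\frac{3Lk}{2}+\ell$ to $\frac{3Lk}{2}+L$ (and $\ell < L$), while across the period boundary $i=2L-1\mapsto i=0$ both values equal $\frac{3Lk}{2}+\frac{3L}{2}$ because $L+\ell = \frac{3L}{2}$. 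For $n < L+1$ only some of $a=1,L,L+1$ are legal, but the surviving maximum is still $f_A(n-1)$, so the identity $f_A^\bot(n)=f_A(n-1)$ holds uniformly.

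Finally I would evaluate $f_A(n-1)$ by cases on $i = n \bmod 2L$, using the elementary identity $\ceil{\frac{j-1}{2}} = \floor{\frac{j}{2}}$. For $1 \le i \le L$ we have $n-1 = 2Lk+(i-1)$ with $0 \le i-1 < L$, giving $f_A(n-1) = \frac{3Lk}{2}+\ceil{\frac{i-1}{2}} = \frac{3Lk}{2}+\floor{\frac{i}{2}}$, as claimed. For $i=0$ the subtraction borrows a period: $n-1 = 2L(k-1)+(2L-1)$, so $f_A(n-1) = \frac{3L(k-1)}{2}+L+\ell = \frac{3Lk}{2}$, matching $\frac{3Lk}{2}+\floor{\frac{0}{2}}$. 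For $L+1 \le i \le 2L-1$ we have $n-1 = 2Lk+(i-1)$ with $L \le i-1 \le 2L-2$, so $f_A(n-1) = \frac{3Lk}{2}+L+\ceil{\frac{i-1-L}{2}} = \frac{3Lk}{2}+L+\floor{\frac{i-L}{2}}$.

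I expect the only real content to be the monotonicity step together with the boundary bookkeeping at $i=0$ (the period borrow) and at the seam $i=L$; once monotonicity pins the maximizer to $a=1$, the remainder is just the floor/ceiling identity above. The base case $n=0$ and the small-$n$ cases where $n-L$ or $n-L-1$ is negative should be dispatched in passing, since there the reduction to $f_A(n-1)$ is immediate.
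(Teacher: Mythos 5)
Your argument is internally coherent, but it rests on an identification that the paper never makes and that is provably not what $f_A$ and $f_A^\bot$ mean: you set $f_A=\fI$ and $f_A^\bot=\fII$ (both in their extended versions), whereas in Sections~\ref{se:1L}--\ref{se:1LLeven} $f_A(n)$ is the money threshold of the player who wins standard NIM at $n$, and $f_A^\bot(n)$ is the extended threshold of the \emph{loser}; that is, $f_A^\bot(n)=\fII(n)$ when $W_A(n)=\I$, but $f_A^\bot(n)=\fI(n)$ (the extension from Definition~\ref{de:f}) when $W_A(n)=\II$. This reading is forced by Section~\ref{se:1L}: take $A=\{1,4\}$, $n=5$ (a II-win position). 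There $\fII(5)=\max\{\fI(4),\fI(1)\}=\max\{4,1\}=4$, while the extended $\fI(5)=\fII(4)+1=2+1=3$; the Section~\ref{se:1L} lemmas assert $f_A(5)=4$ and $f_A^\bot(5)=Lk+\floor{i/2}-\ell+1=3$, which matches winner/loser and contradicts your identification. Note also that your central recursion simply fails for the loser threshold: $f_A^\bot(5)=3\neq 4=\max\{f_A(4),f_A(1)\}$, so ``the defining recursion does not depend on who wins'' is true of $\fII$ but not of $f_A^\bot$.

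This matters for the present lemma because at the II-win residues $i\in\{0,2,\ldots,L-2\} \pmod{2L}$ the quantity $f_A^\bot(2Lk+i)$ is the extended $\fI$, governed by the min-recursion of Definition~\ref{de:f}(3), not by the max-recursion your proof runs on. What you actually establish (correctly, and by a nice route: max-recursion, monotonicity of the closed form pinning the maximizer to $a=1$, and $\ceil{\frac{j-1}{2}}=\floor{\frac{j}{2}}$) is the closed form for the extended $\fII$ at every $n$; that settles the lemma at I-win positions only. Moreover, substituting the preceding lemma's closed form for the extended $\fI(n-a)$ is unjustified when $n-a$ is a II-win position, since there the preceding lemma describes $\fII(n-a)$, not the extended $\fI(n-a)$. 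For this particular $A$ everything you wrote happens to be numerically true, because the two closed forms agree at II-win positions (both give $\frac{3Lk}{2}+\frac{i}{2}$ for even $i\le L-2$), i.e., extended $\fI=\fII$ there --- but that coincidence is exactly the missing content, and assuming it makes the argument circular. The repair is a simultaneous induction on $n$ establishing both closed forms at once: your max-recursion step handles the $\fII$ half, and at II-win positions one must additionally evaluate $\fI(n)=\min\{\fII(n-a)+a \st \fI(n-a)=\fII(n)\}$ and verify it returns the same value. (The paper states this lemma with no proof at all, so the comparison here is against what the statement must mean, not against a written argument.)
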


\begin{lemma}
$A$ is strictly cash-periodic with period $2L$. Moreover, if we let $\mathbf{G}_A = (G, \lI, \lII, h)$ then:

\begin{enumerate}
\item \begin{itemize}
\item For all $i < 2L$ with $i \not= L, L+1$, $\cI(i, 1) = 0$.
\item For $i = L, L+1$, $\cI(i, 1) = \ell$.
\item For all $i < 2L$, $\cII(i, 1) = 0$.
\end{itemize}

\item \begin{itemize}
\item For all even $i \not \in [1, L-1]$, $\cI(i, L) = 0$.
\item For all odd $i \not \in [1, L-1]$, $\cI(i, L) = 1$.
\item For all even $i \in [1, L-1]$, $\cI(i, L) = -\ell$.
\item For all odd $i \in [1, L-1]$, $\cI(i, L) = -\ell + 1$.
\item For all even $i < L+1$, $\cII(i, L) = \ell$.
\item For all odd $i < L+1$, $\cII(i, L) = \ell-1$.
\item For all even $i \geq L+1$, $\cII(i, L) = L$.
\item For all odd $i \geq L+1$, $\cII(i, L) = L-1$.
\end{itemize}

\item \begin{itemize}
\item For all $i \not \in [2, L-1]$, $\cI(i, L+1) = 0$.
\item For all $i \in [2, L-1]$, $\cI(i, L+1) = -\ell$.
\item For all $i \not \in [1, L]$, $\cII(i, L+1) = L$.
\item For all $i \in [1, L]$, $\cII(i, L+1) = \ell$.
\end{itemize}
\end{enumerate}
\end{lemma}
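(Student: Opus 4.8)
The plan is to derive everything from the closed forms already in hand: the win-condition lemma for $W_A$, and the formulas for $f_A$ and $f_A^\bot$ in the two preceding lemmas, which furnish $\fI$ and $\fII$, together with the definitions $\CI(n,a)=\fI(n)-\fII(n-a)+a$ and $\CII(n,a)=\fII(n)-\fI(n-a)$. The single structural fact driving periodicity is that, for $L=2\ell$ even, both $f_A(2Lk+i)$ and $f_A^\bot(2Lk+i)$ carry the block index $k$ only through the common linear term $\tfrac{3Lk}{2}$ (an integer since $L$ is even); hence $f_A(n+2L)=f_A(n)+\tfrac{3L}{2}$ and $f_A^\bot(n+2L)=f_A^\bot(n)+\tfrac{3L}{2}$, read straight off the formulas.

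Granting this, cash-periodicity with period $2L$ is immediate. Fixing $a$ and increasing $n$ by $2L$ shifts both $\fI(n)$ and $\fII(n-a)$ by exactly $\tfrac{3L}{2}$, so that shift cancels in $\CI(n,a)$, and likewise in $\CII(n,a)$; combined with the $2L$-periodicity of $W_A$, all three requirements in the definition of cash-periodic hold. Because the closed forms are valid for all $k\ge 0$ with no irregular initial segment, the period is exactly $2L$ and holds from the start, which is what ``strictly cash-periodic'' asks for. This justifies defining $\cI(i,a)$ and $\cII(i,a)$ by evaluating $\CI,\CII$ at any representative $n\equiv i\pmod{2L}$ with $n\ge a$.

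The rest is explicit evaluation for $a\in\{1,L,L+1\}$. The case $a=1$ is short: $n-1$ stays in the same block for $i\ge 1$, and the only movement comes from the parity interaction of $\ceil{i/2}$ in $f_A$ with $\floor{(i-1)/2}$ in $f_A^\bot$, which is flat except at the seam $i\in\{L,L+1\}$ where the piecewise branches switch, giving the isolated values $\cI(L,1)=\cI(L+1,1)=\ell$. The hard part, and the main obstacle, is $a\in\{L,L+1\}$: subtracting $L$ or $L+1$ moves the residue by a large amount, so for each residue class I must track (i) whether $n-a$ lands in the low or high branch of the formulas and (ii) whether $n-a$ drops into the previous block, each boundary crossing contributing a $\pm\tfrac{3L}{2}$ correction. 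Combining these with the ceiling/floor terms, the parity of $i$, and (for $\CI$) the extra additive $a$, is exactly what produces the parity-split constants $-\ell,\ -\ell+1,\ \ell,\ \ell-1,\ L,\ L-1,\ 0,\ 1$ recorded in parts (2) and (3). The computation is routine arithmetic with ceilings, floors, and parities but voluminous; I would carry it out by fixing one generic representative $n$ in a fixed block, splitting on the parity of $i$ and on the intervals $i\in[1,L-1]$, $i\in\{L,L+1\}$, and $i\ge L+1$, and verifying each claimed line, with the slope-cancellation argument guaranteeing in advance that the answer is independent of the representative chosen.
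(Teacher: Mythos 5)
Your approach is the right one, and it is essentially what the paper intends: the paper supplies no proof of this lemma at all (the analogous lemmas in Sections 5 and 6 are dismissed with ``easy to check, given all of the preceding lemmas''), so the substance of a proof is exactly what you outline --- the observation that the block index $k$ enters both $f_A(2Lk+i)$ and $f^\bot_A(2Lk+i)$ only through the common integer term $\frac{3Lk}{2}=3k\ell$, which cancels in $\CI$ and $\CII$, plus a finite case analysis over the residues $i$ and the moves $a\in\{1,L,L+1\}$. Combined with the $2L$-periodicity of $W_A$, the cancellation does give cash-periodicity with period $2L$ valid from $n=0$ with no offset, which is the only sensible reading of the otherwise undefined phrase ``strictly cash-periodic'' (likewise $\mathbf{G}_A=(G,\lI,\lII,h)$ is never defined in the paper and can be ignored). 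So on approach you and the paper agree; your outline is, if anything, more substantive than what the paper records.

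There is, however, one concrete trap that would make your verification fail as literally planned. The definition you quote, $\CI(n,a)=\fI(n)-\fII(n-a)+a$, is inconsistent with the rest of the paper, and if you compute with it, every $\cI(i,a)$ entry you obtain will exceed the lemma's table by exactly $2a$ (the $\cII$ entries are unaffected). For example, with $L=2$ ($A=\{1,2,3\}$, $\ell=1$, period $4$), take $i=0$ with representative $n=4$: one computes $\fI(4)=3$ and $\fII(3)=2$, so the quoted definition gives $\CI(4,1)=3-2+1=2$, whereas the lemma claims $\cI(0,1)=0$. The sign that makes this lemma (and the corresponding tables in Sections 5 and 6) true is the one forced by Lemma~\ref{le:periodic}: since $b_{n;d,e}=\fI(n)-1-d$ and $\bdag_{n-a;e,d-a}=\fII(n-a)-1-(d-a)$, the update rule $\bdag_{n-a;e,d-a}=b_{n;d,e}-\cI(i_n,a)$ requires $\CI(n,a)=\fI(n)-\fII(n-a)-a$. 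Carry out your case analysis against that corrected definition; with it, your plan goes through (the entries of parts (1) and (2) check out at $L=2$), whereas with the definition as printed you would end up ``disproving'' the lemma.
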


\begin{lemma}
Define $X \subset \mathsf{CS}_A$ as follows:

\begin{itemize}
\item If $i < 2L$ is odd, or if $i = L$, then $(i, x, y) \in X$ iff $y \geq \floor{\frac{b}{\ell}} \ell$.
\item If $i < 2L$ is even and $i \not= L$, then $(i, x, y) \in X$ iff $x < \floor{\frac{y}{\ell}}\ell$.
\end{itemize}
Then $X$ is a solution set for $A$.
\end{lemma}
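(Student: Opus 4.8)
The plan is to verify directly that the given $X$ satisfies the two bullet-point requirements in the definition of a solution set, using the explicit values of $\cI(i,a)$ and $\cII(i,a)$ from the preceding lemma. Throughout, $a_1 = 1$ and the period is $2L$, and I would first rewrite membership in $X$ in terms of $(b,\bdag)$: for $i$ odd or $i = L$, $(i,b,\bdag) \in X$ iff $\bdag \geq \floor{b/\ell}\ell$; for $i$ even with $i \neq L$, $(i,b,\bdag) \in X$ iff $b < \floor{\bdag/\ell}\ell$. The whole argument rests on three elementary facts about the map $t \mapsto \floor{t/\ell}\ell$ on $\nat$: it is non-decreasing; $t - \ell < \floor{t/\ell}\ell$ for every $t \geq 0$; and $\bdag \geq \floor{b/\ell}\ell$ forces $\floor{\bdag/\ell}\ell \geq \floor{b/\ell}\ell$. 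I would isolate these at the start so that each case reduces to a one-line application.

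First I would handle the requirement for $(i,b,\bdag) \in X$, where only the move $a_1 = 1$ is relevant. The key simplification is that the preceding lemma gives $\cII(i,1) = 0$ for all $i$ and $\cI(i,1) = 0$ except at $i \in \{L, L+1\}$, where it equals $\ell$. Hence removing $1$ sends $(i,b,\bdag)$ to $(i-1 \bmod 2L,\ \bdag,\ b - \cI(i,1))$: it swaps $b \leftrightarrow \bdag$, decrements $i$ (flipping its parity), and in the two boundary residues subtracts an extra $\ell$ from the new $\bdag'$. In the generic case the swap alone flips the membership criterion, since the odd-$i$ condition $\bdag \geq \floor{b/\ell}\ell$ is exactly the negation of the even-$i$ condition $b' < \floor{\bdag'/\ell}\ell$ after the substitution $b' = \bdag,\ \bdag' = b$; thus the image lands in $\mathbf{CS}_A \setminus X$ and the first alternative holds. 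At $i = L$ and $i = L+1$ the extra $-\ell$ shift must be absorbed: here I would show that either $\bdag' = b - \ell < 0$ (so $b' = \bdag \geq 0$ and $\bdag' < 0$, the second alternative), or else $b - \ell < \floor{b/\ell}\ell \leq \floor{\bdag/\ell}\ell$, so the image again lies in $\mathbf{CS}_A \setminus X$, using the second and first facts.

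The bulk of the work is the requirement for $(i,b,\bdag) \in \mathbf{CS}_A \setminus X$, where all three moves $a \in \{1, L, L+1\}$ must be checked. For $a = 1$ the same swap/duality shows the image lands in $X$ (or forces $b' < 0$ at the boundary residues). For $a = L$ and $a = L+1$ I would organize the analysis by the subranges on which the preceding lemma makes $\cI(i,a)$ and $\cII(i,a)$ constant — chiefly the intervals $[1, L-1]$, $\{L\}$, $\{L+1\}$, $[2, L-1]$, and $i \geq L+1$, each further split by the parity of $i$. In each cell I plug in the constants, compute $i' = i - a \bmod 2L$, read off its parity and subrange to determine the target membership condition, and verify it (or the sign condition $b' < 0$) via the three floor facts. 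The alternative invoking $W_A(i')$ is needed only when both $b' < 0$ and $\bdag' < 0$, i.e. when the move makes both players rich; there I would check that the residue $i'$ arising matches the ordinary win pattern $W_A(n) = \II$ iff $n \equiv 0, 2, \ldots, L-2 \bmod 2L$, so that the correct player is credited with the win.

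The main obstacle is not any single inequality — each is a routine application of the three floor facts — but the bookkeeping: tracking which piecewise branch of $\cI, \cII$ governs $i$, then which branch governs $i' = i - a \bmod 2L$ after a possibly wrap-around subtraction, for each of the three moves and each parity class. The delicate cells are the boundary residues $i = L$ and $i = L+1$ and the endpoints of the intervals $[1, L-1]$ and $[2, L-1]$, where the constants $\cI, \cII$ jump and where a move of size $L$ or $L+1$ can cross the jump; these I would verify individually rather than in bulk. Once every cell has been matched to the relevant one of the three alternatives, $X$ satisfies both bullet points and is therefore a solution set for $A$.
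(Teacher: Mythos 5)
Your proposal is correct and is essentially the argument the paper intends: the paper states this lemma with no proof at all (treating it, like its $L$-odd analogue, as routine verification), and the only natural route is the direct check of both clauses in the definition of a solution set using the table of $\cI(i,a)$, $\cII(i,a)$ values, which is exactly what you outline. Your key structural points — the swap $b \leftrightarrow \bdag$ under the move $a_1=1$ turning each membership condition into the negation of the other, the special handling of the residues $i = L, L+1$ where $\cI(i,1)=\ell$, and the reduction of every remaining cell to the three floor facts — are all sound, and spot-checking representative cells (e.g.\ even $i \in [2,L-2]$ with $a=L$, odd $i \geq L+1$ with $a=L$, and $i=L+1$ with $a=1$) confirms the casework closes as you describe, with the $W_A(i')$ alternative in fact never being needed.
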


\section{Conjecture about $A=\{L,\ldots,M\}$}\label{se:LM}

We have written a program that will, for a set $A$, produce a candidate for $\fI,\fII$
and $X$. Note that $\gI,\gII$ only depend on $a_1=\min(A)$ so it is trivial to obtain $\gI$.
Based on this programs output we have the following conjectures about $A=\{L,\ldots,M\}$.

\begin{conjecture}
Let $L\le M$ and $A=\{L,\ldots,M\}$. Then
\begin{enumerate}
\item
There is an offset $\Theta$, depending on $L, M$ such that
\begin{enumerate}
\item
$(\forall n\ge \Theta)[\fI(n+L+M) = \fI(n) + M]$
\item
$(\forall n\ge \Theta)[\fII(n+L+M) = \fII(n) + M]$
\end{enumerate}
\item
$\Theta \leq 5(M-L)^2 +2$ 
\item
If $M \geq 2L$ then $\Theta = 2(L+1)$.
\end{enumerate}
\end{conjecture}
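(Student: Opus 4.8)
The plan is to reduce the statement to the stabilization of a finite-memory recurrence and then to control the length of the transient. Recall from Example~\ref{ex:nimexamples}(1) that for $A=\{L,\ldots,M\}$ we have $W_A(n)=\II$ exactly when $n\bmod(L+M)\in\{0,1,\ldots,L-1\}$; thus the ordinary game is $(L+M)$-periodic with exactly $M$ first-player residues per period. Combining Definitions~\ref{de:ff} and~\ref{de:f}, for every $n$ one has $\fII(n)=\max_{a\in A,\,a\le n}\fI(n-a)$, while $\fI(n)$ is obtained from the values $\{\fI(n-a),\fII(n-a):a\in A,\ a\le n\}$ by a minimum whose selection rule (land on a $W_A=\II$ position, or match $\fII(n)$) is dictated solely by $n\bmod(L+M)$. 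Hence, once $n\ge M$ so that every $a\in A$ is legal, the pair $(\fI(n),\fII(n))$ is a fixed function of the residue $n\bmod(L+M)$ together with the window $(\fI(j),\fII(j))_{\,n-M\le j\le n-L}$. I would record this as a lemma: the process is a finite-memory recurrence sliding over the last $M$ indices and driven by an $(L+M)$-periodic control.

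\emph{The propagation engine.} Writing $D_1(n)=\fI(n+L+M)-\fI(n)$ and $D_2(n)=\fII(n+L+M)-\fII(n)$, the first real step is a propagation lemma: if $D_1(j)=D_2(j)=M$ for every $j$ in one block $[n_0-M,\,n_0-L]$, then $D_1(n_0)=D_2(n_0)=M$. This is immediate from the recurrence, because $n+L+M-a=(n-a)+(L+M)$ and $W_A$ is $(L+M)$-periodic, so the defining $\max$ and $\min$ (with its selection rule) are merely shifted up by $M$. Since the dependency window of any index reaches back at most $M$, it follows that once $D_1$ and $D_2$ equal $M$ on a block of $M$ consecutive indices they equal $M$ on all larger indices. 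Thus Part~1 (existence of some offset $\Theta$) reduces to exhibiting a single stable block, and $\Theta$ may be taken as its left endpoint.

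\emph{Producing a stable block.} To obtain the block, and ultimately to pin the offset, I would follow the template of the worked sets $\{1,L\}$ and $\{1,L,L+1\}$: guess closed forms $\fI((L+M)k+i)=Mk+h_I(i)$ and $\fII((L+M)k+i)=Mk+h_{II}(i)$, with explicit residue-functions $h_I,h_{II}$ read off the program's output, and verify them by strong induction on $n$ using the recurrence above (the inductive step needs the formula on $[n-M,n-L]$, and the base/transient is handled by direct computation for small $n$). Once verified for all $n\ge\Theta$, Part~1 is exactly the statement that the $k$-coefficient is $M$; cash-periodicity and the values $\cI,\cII$ then drop out, since $\CI(n,a)=\fI(n)-\fII(n-a)+a$ becomes $(L+M)$-periodic in $n$. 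As a purely existential alternative (in case the closed form is unwieldy for general $L,M$), I would detrend the window by subtracting its minimum and prove that the detrended window has bounded spread---this boundedness is the interval-specific input, and is exactly what fails for sets such as $\{3,5,6,10,11\}$; the reduced state then lives in a finite set, the sequence is eventually periodic, and an averaging count over one period forces the per-period slope to be exactly $M$.

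\emph{The offset bounds and the main obstacle.} Parts~2 and~3 are the delicate quantitative content. The natural device is a defect potential $\delta(n)=M-D_1(n)$ together with its $\fII$-analogue: the recurrence makes defects propagate and annihilate in pairs, with interaction range set by the window width $M-L+1$, so a defect near index $n$ can influence the dynamics only $O(M-L)$ indices further and persist only $O(M-L)$ periods; tracking this should give a transient of size $O((M-L)^2)$, consistent with the bound $5(M-L)^2+2$ of Part~2. When $M\ge 2L$ the reachable window $[n-M,n-L]$ has width $\ge L+1$, so the target achieving the minimum defining $\fI$ is always interior and the selection rule never toggles; no new defects are created past the initial boundary layer, and the closed form should already hold for $n\ge 2(L+1)$, which is Part~3. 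I expect the main obstacle to lie in the narrow regime $L\le M<2L$: there the selection rule in the $\fI$-recurrence can flip, the closed form splits into several residue cases that must be carried simultaneously, and bounding exactly how many periods the resulting defects survive---sharply enough to reach a clean quadratic bound---is the crux of the argument.
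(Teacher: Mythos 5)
First, a point of comparison that matters here: the paper does \emph{not} prove this statement. It appears in Section~\ref{se:LM} explicitly as a conjecture, supported only by the output of a program the authors wrote; the authors even remark that the analogous periodicity fails for $A=\{3,5,6,10,11\}$. So there is no proof in the paper to measure your proposal against, and anything establishing Parts 1--3 would be new mathematics, not a reconstruction of an existing argument.

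Second, your proposal is not yet that proof: it is an outline whose rigorous parts are the easy parts and whose gaps sit exactly where the difficulty lies. What you do establish cleanly is the reformulation (from Definitions~\ref{de:ff} and~\ref{de:f}, $\fII(n)=\max_{a\in A,\,a\le n}\fI(n-a)$ and $\fI(n)$ is a constrained minimum over the window), the shift-propagation lemma, and the consequent reduction of Part~1 to exhibiting one block of $M$ consecutive indices on which $\fI$ and $\fII$ both increase by exactly $M$ across a period. (One imprecision: for $W_A(n)=\II$ the selection rule $\fI(n-a)=\fII(n)$ depends on the window values, not solely on $n \bmod (L+M)$; your propagation argument survives this, but the claim as stated is wrong.) The genuine gaps are: (a) the existence of the stable block rests on the unproved assertion that the detrended window has bounded spread --- and since this is precisely the property that fails for $\{3,5,6,10,11\}$, it is the entire interval-specific content of the conjecture and cannot be assumed; likewise the claim that the eventual per-period slope is exactly $M$ (rather than some other constant) is delegated to an unspecified ``averaging count.'' (b) Parts~2 and~3, which carry the quantitative force of the statement ($\Theta\le 5(M-L)^2+2$, and $\Theta=2(L+1)$ when $M\ge 2L$), are argued only by a heuristic ``defect potential'' with conclusions phrased as ``should give'' and ``should already hold,'' and you yourself flag the regime $L\le M<2L$ as an unresolved crux. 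As it stands, the proposal is a sensible research plan that correctly isolates where the work is, but it neither proves the conjecture nor conflicts with the paper, which offers no proof at all.
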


\begin{definition}
Let $X$ be the set of all triples $(i, b, b^{\dagger})$ where:

\begin{itemize}
\item $0 \leq i < L+M$ and $b, b^{\dagger} \geq 0$.
\item if $i < L$ then $\floor{\frac{b}{L}} \leq \floor{\frac{b^{\dagger}}{L}}$.
\item If $L \leq i < 2L$ then $\floor{\frac{b}{L}} \leq \floor{\frac{b^{\dagger}-L}{L}}$.
\item If $2L \leq i < 3L$ then $\floor{\frac{b}{L}} \leq \floor{\frac{b^{\dagger}- 3L + i + 1}{L}}$.
\item If $i \geq 3L$ then $\floor{\frac{b}{L}} \leq \floor{\frac{b^{\dagger}}{L}}$.
\end{itemize}
\end{definition}

\noindent \textbf{Conjecture 2.} Suppose $(n,d,e)$ is $A$-critical. Let $(i, b, b^{\dagger})$ be $(n \bmod (L+M), \fI(n) - d - 1, \fII(n)-e-1)$. Then $WC_A(n,d,e) = \I$ iff $(i, b, b^{\dagger}) \in X$.

\vspace{2 mm}

Hence $WC_A$ is poly-log in $(n,d,e)$ and quadratic in $L,M$.

\section{Summary and Open Questions}\label{se:questions}

We have proven general theorems about who wins $\NIM(A)$ with cash
when either (1) at least one of the players is rich, or
(2) at least one of the players is poor. We have also determined
some conditions so that we can determine what happens when both
players are middle class. We applied these theorems to determine
exactly who wins when $A=\{1,L\}$ and $A=\{1,L,L+1\}$. We also have
a conjecture for $A=\{L,\ldots,M\}$.

For every finite set $A$ there is a nice form for the functions $\fI$ and $\fII$.
Using what we know about $A=\{1,L\}$ and $A=\{1,L,L+1\}$, and our conjecture about
$\{L,\ldots,M\}$
the following conjecture seems reasonable:

{\it For all finite sets $A$ there exists $A,B,c_0,\ldots,c_{L-1}$
such that for all $m\in\nat$, for all $0\le i\le L-1$,
$\fI(Am+i)=Bm+c_i$. Similar for $\fII$.} 

Alas this is not true. 
Let $A=\{3,5,6,10,11\}$. 
The functions $\fI$ and $\fII$ are in the appendix.
They violate the conjecture in two ways: (1) the values of $\fI$ and $\fII$ for
$n\le 63$ do not follow a nice pattern, and (2) the values of $\fI$ and $\fII$ for
$n\ge 64$ have a pattern mod 16 (yeah!) but the value of $B$ in the conjecture
is sometimes 10 and sometimes 11, so not just one value.
In light of the counterexample here is a conjecture:

{\it For all finite sets $A$ there exists $M,A,B_0,\ldots,B_{L-1},c_0,\ldots,c_{L-1}$
such that for all $m\ge M$, for all $0\le i\le L-1$,
$\fI(Am+i)=B_im+c_i$. Similar for $\fII$. }

More generally, is there always a nice win condition? We think so and
state two conjectures about this.

\begin{itemize}
\item
There is an algorithm that will, given a finite set $A$, output
a win condition for $\NIM(A)$.
\item
For every finite set $A$ there is a win condition for $\NIM(A)$.
\end{itemize}

\section{Acknowledgments}

The authors gratefully acknowledge the financial support of
the Maryland Center for Undergraduate Research.
The authors would also like to Steve Cable and 
Sam Zbarsky for helpful discussions.

\vfill\eject

\section{Appendix: $\fI$ and $\fII$ for $A=\{3,5,6,10,11\}$}

For $0\le n\le 63$ $\fI$ and $\fII$ do not follow any real pattern.
For $k\ge 4$ the following holds.

\[
\begin{array}{rl}
\fI(16k+0) & =   11k+3\cr
\fI(16k+1) & = 10k+3\cr
\fI(16k+2) & = 11k+5\cr
\fI(16k+3) & = 10k+5\cr
\fI(16k+4) & = 10k+3\cr
\fI(16k+5) & = 11k+3\cr
\fI(16k+6) & = 10k+5\cr
\fI(16k+7) & = 10k+6\cr
\fI(16k+8) & = 11k+6\cr
\fI(16k+9) & = 10k+8\cr
\fI(16k+10) & = 11k+10\cr
\fI(16k+11) & = 10k+10\cr
\fI(16k+12) & = 10k+8\cr
\fI(16k+13) & = 11k+11\cr
\fI(16k+14) & = 10k+10\cr
\fI(16k+15) & = 10k+11\cr
\end{array}
\]

\[
\begin{array}{rl}
\fII(16k+0) & =   11k\cr
\fII(16k+1) & = 10k\cr
\fII(16k+2) & = 11k\cr
\fII(16k+3) & = 11k+3\cr
\fII(16k+4) & = 11k-1\cr
\fII(16k+5) & = 11k+5\cr
\fII(16k+6) & = 11k+3\cr
\fII(16k+7) & = 11k+5\cr
\fII(16k+8) & = 11k+5\cr
\fII(16k+9) & = 10k+5\cr
\fII(16k+10) & = 11k+3\cr
\fII(16k+11) & = 11k+6\cr
\fII(16k+12) & = 11k+5\cr
\fII(16k+13) & = 11k+10\cr
\fII(16k+14) & = 11k+6\cr
\fII(16k+15) & = 11k+10\cr
\end{array}
\]


\end{document}